\documentclass[11pt,a4paper]{amsart}
\usepackage[french, ngerman, italian, english]{babel}
\usepackage{amsmath,amsthm, amssymb, amsfonts}
\usepackage{float}
\usepackage{setspace}
\usepackage{anysize}
\usepackage{url}
\usepackage{color,graphicx}
\usepackage{xcolor}
\usepackage[colorlinks=true]{hyperref}
\colorlet{darkgreen}{green!50!black}
\colorlet{darkred}{red!80!black}
\hypersetup{colorlinks,%
citecolor=darkgreen,%
filecolor=OliveGreen,%
linkcolor=darkred,%
urlcolor=darkblue,%
pdftex}
\usepackage{tikz}
\usepackage{paralist}
\usepackage{color}
\usepackage{verbatim}
\usepackage[utf8]{inputenc}
\usepackage[all]{xy}

\theoremstyle{plain}
\newtheorem{theorem}{\bf Theorem}[section]

\newtheorem*{theorem*}{Theorem}
\newtheorem{proposition}[theorem]{\bf Proposition}
\newtheorem{lemma}[theorem]{\bf Lemma}
\newtheorem{corollary}[theorem]{\bf Corollary}

\theoremstyle{definition}
\newtheorem{definition}[theorem]{\bf Def\mbox{}inition}
\newtheorem{remark}[theorem]{\bf Remark}
\theoremstyle{remark}

\theoremstyle{remarks}
\newtheorem{example}[theorem]{\bf Example}
\theoremstyle{example}

\title{Gorenstein liaison for toric ideals of graphs}

\author{Alexandru Constantinescu}
\address{Dipartimento di Matematica dell'Universit\`a di Genova, Via Dodecaneso 35, 16146, Genova, Italy}
\email{constant@dima.unige.it}

\author{Elisa Gorla}
\address{Institut de Math\'ematiques, Universit\'e de Neuch\^atel, 
Rue Emile-Argand 11, 2000 Neuch\^atel, Switzerland}
\email{elisa.gorla@unine.ch}

\thanks{The authors were partially supported by the Swiss National
  Science Foundation through grants no. PP00P2\_123393 and 200021\_150207.}

\def\ini{\operatorname{\rm in}}

\def \link{{\operatorname{link}}}
\def \lk{{\operatorname{link}}}
\def \lk{{\operatorname{link}}}
\def \height{{\operatorname{ht}}}

\def \inid{{\operatorname{in}}}

\def \zz{\ensuremath{\mathbf{z}}}
\def \ww{\ensuremath{\mathbf{w}}}
\def \cy{\ensuremath{\mathbf{c}}}

\def \ee{\ensuremath{\mathbf{e}}}
\def \aa{\ensuremath{\alpha}}

\def \cc{\ensuremath{\gamma}}

\def\KG{\ensuremath{\mathbb{K}[G]}}
\def\PG{\ensuremath{P(G)}}

\def\C{\mathcal{C}}

\def \J{\mathcal J}

\def \cc{\widetilde{\gamma}}

\def \KK{\mathbb{K}}
\def \ZZ{\mathbb{Z}}

\def \I{\mathcal I}

\def\k#1{\ensuremath{\mathbb{K}[x_{1} , \ldots , x_{#1}]}}

\begin{document}

\begin{abstract}
A central question in liaison theory asks whether every
Cohen-Macaulay, graded ideal of a standard graded $\KK$-algebra
belongs to the same G-liaison class of a complete intersection. In
this paper we answer this question positively for toric ideals defining edge subrings of  bipartite graphs. 
\end{abstract}

\maketitle
\section*{Introduction}

Let $\KK$ be a field and $S$ be a standard graded $\KK$-algebra. A central
question in liaison theory asks whether every Cohen-Macaulay, graded
ideal of $S$ belongs to the same G-liaison class of a complete
intersection. The question has been answered in the affirmative in
several cases of interest, including for ideals of height
two~\cite{g}, Gorenstein ideals~\cite{w}, \cite{cdh}, special families
of monomial ideals~\cite{mn1}, \cite{hu}, \cite{nr}, generically
Gorenstein ideals containing a linear form~\cite{mn}, and several
families of ideals with a determinantal or pfaffian
structure~\cite{kl01}, \cite{e1}, \cite{e2}, \cite{e3}, \cite{e4},
\cite{e5}. 
The argument is often inductive, meaning that an ideal of the family is
linked to another one with smaller invariants, and the ideals with the smallest
invariants are complete intersections. For example, let $m\leq n$ and
consider an ideal of height $n-m+1$ generated by the maximal minors of
an $m\times n$ matrix. Any such ideal is G-linked in two steps to an
ideal of the same height, generated by the maximal minors of an
$(m-1)\times(n-1)$ matrix, and the ideals of height $n-m+1$ generated
by the entries of a $1\times(n-m+1)$ matrix are complete
intersections. 
In this paper, we apply a similar approach to a family of ideals associated to graphs. 

There are several ways of associating a binomial ideal to a graph \cite{v, Stu}. 
Here we consider the ideal $P(G)$ defining the edge subring $\KG$ of $G$, 
that is the $\KK$-algebra whose generators correspond to the edges of the graph, 
and whose relations correspond to the even closed walks. 
For a survey on the importance of these rings we refer to \cite[Chapters 10 and 11]{v}. 
These binomial ideals are prime and Cohen-Macaulay, for all bipartite graphs.
We prove that they belong to the G-biliaison class of a complete intersection. 
This implies in particular that they can be G-linked to a complete intersection in an even number of steps. 
An interesting feature of the liaison steps that we produce is that  
the same steps link the corresponding initial ideals, with respect to
an appropriate order. In particular, the initial ideals are Cohen-Macaulay.
Understanding the G-liaison pattern of the initial ideals allows us also to show that the 
associated simplicial complexes are vertex decomposable. 
For the determinantal and pfaffian ideals discussed above, the same behavior in terms of linkage 
of initial ideals and vertex decomposability was shown in~\cite{gmn}.

\section{Notation and preliminaries}\label{prelim}

For a positive integer $n$, we denote by $[n]$ the set $\{1,\ldots,n\}$.
Let $G$ be a  graph  with vertex set  $V(G) = [n]$ and edge set $E(G)
\subseteq 2^{[n]}$. We denote by $q_G$ (or just $q$, if no confusion arises) the number of edges of $G$.
The \emph{local degree} $\rho(v)$ of $v$ is the number of edges incident to $v$. A \emph{leaf} is a vertex of local degree 1. 
A graph is \emph{bipartite} if its vertex set $V(G) = V_1 \sqcup V_2$ is a disjoint union of
two sets, such that every edge joins
a vertex from $V_1$ with a vertex from $V_2$. It is well known that a
graph is bipartite if and only if it does not contain odd cycles.  

\begin{definition}\label{def:walk}
A \emph{walk} of length $m$ in $G$ is an alternating sequence of
vertices and edges 
\[\ww = \{v_0,e_{1}, v_1,\ldots,v_{m-1},e_{m},v_m\},\] 
where $e_{k} = \{v_{k-1},v_k\}$ for all $k=1,\ldots,m$.
A walk may also be written as a sequence of vertices with the edges omitted, 
or vice-versa. 
If $v_0 = v_m$, then \ww~is a \emph{closed walk}. 
A walk is called \emph{even} (respectively \emph{odd}) if its length
is even (respectively odd).
A walk is called a \emph{path} if its vertices are distinct. 
A \emph{cycle} in $G$ is a closed walk $\{v_0,e_1,v_1,\ldots,v_m\}$ 
in which the vertices $v_1,\ldots,v_m$ are distinct. 
Denote by $\C(G)$ the set of even cycles of $G$.
\end{definition}

Let $\KK$ be a field and $ R = \k{n}$ be the polynomial ring over $\KK$
with the standard grading given by $\deg(x_i) =1$ for all $i\in
[n]$. The \emph{edge subring} of the graph $G$ is the $\KK$-subalgebra of $R$ 
\[\KG = \KK[x_ix_j~:~\{i,j\} \in E(G)].\]
The algebra $\KG$ is standard graded, with the normalized induced grading from $R$. 
If we label the edges of $G$ by $e_1,\dots,e_q$, we have the graded epimorphism 
\[\phi:S=\KK[e_1,\ldots,e_q]\longrightarrow \KG,\quad e_t = \{i,j\}  \longmapsto x_ix_j,\]
where $S$ is a standard graded polynomial ring. We denote by $\PG$ the kernel of $\phi$. 
This is a graded, binomial  ideal of $S$, which we call the
\emph{toric ideal} of $G$. We identify the edges of $G$ with the variables of $S$.
For any even walk $\ww = \{e_{j_1},\ldots,e_{j_{2m}}\} $ in $G$, define the binomial 
\[T_{\ww} = e_{j_1}e_{j_3}\cdots e_{j_{2m-1}} - e_{j_2}e_{j_4}\cdots e_{j_{2m}}.\]
It is easy to check that $T_{\ww} \in \PG$ for all even closed walks $\ww$ in $G$.

\begin{proposition}[\cite{v_art}]\label{prop:basics}
If $G$ is a bipartite graph with corresponding toric ideal $P(G)$, then:
\begin{itemize}
\item[1.] $P(G) = (T_{\ww}\mid \ww~\mbox{ is an even closed walk in~} G) = (T_{\cy}\mid \cy\in\C(G) ).$
\item[2.] $\height P(G) = q-n+1$.
\item[3.] $P(G)$ is prime and Cohen Macaulay.
\end{itemize}
\end{proposition}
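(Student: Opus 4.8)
The plan is to establish the three statements in order, noting that only Cohen--Macaulayness presents real difficulty and that we may assume $G$ connected (the general case follows component by component, in disjoint sets of variables, which only changes the additive constant in the second statement). For the \textbf{generators}, recall that $P(G) = \Ker\phi$ with $\phi$ a monomial map, so $P(G)$ is a toric ideal and is therefore generated by binomials $m_1 - m_2$, where $m_1,m_2$ are monomials of the same degree, of disjoint support, with $\phi(m_1) = \phi(m_2)$. Such a pair records two multisets of edges of $G$ covering every vertex equally often; colouring the factors of $m_1$ red and those of $m_2$ blue, and pairing at each vertex the incident red ends with the incident blue ends, decomposes this data into a disjoint union of closed walks that alternate in colour, and peeling these walks off one at a time shows that $m_1 - m_2$ lies in the ideal generated by the binomials $T_\ww$ of even closed walks (evenness is automatic, as $G$ has no odd cycle). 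To replace arbitrary even closed walks by even cycles I would induct on length: if $\ww$ revisits a vertex $v$, split it at $v$ into shorter closed walks $\ww_1$ and $\ww_2$; bipartiteness forces the portion of $\ww$ between the two visits of $v$ to have even length, so $\ww_1$ and $\ww_2$ are again even, and, after rotating $\ww$ so that $v$ occupies an even position, a short computation gives $T_\ww \in (T_{\ww_1}, T_{\ww_2})$. The base case is a cycle, and every cycle of a bipartite graph lies in $\C(G)$, which yields both equalities.

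For the \textbf{height}, we have $\height P(G) = q - \dim(S/P(G)) = q - \dim\KG$, and $\dim\KG$ equals the rank of the lattice in $\ZZ^n$ spanned by the vectors $\mathbf{e}_i + \mathbf{e}_j$ for $\{i,j\}\in E(G)$. Writing $V(G) = V_1 \sqcup V_2$, every such vector is orthogonal to $\mathbf{1}_{V_1} - \mathbf{1}_{V_2}$, so this rank is at most $n-1$; on the other hand the incidence vectors of the edges of a spanning tree of $G$ are linearly independent, so the rank is exactly $n-1$. Hence $\dim\KG = n-1$ and $\height P(G) = q - n + 1$.

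\textbf{Primeness} is immediate, since $P(G)$ is the kernel of a homomorphism onto the domain $\KG \subseteq R$. For \textbf{Cohen--Macaulayness} I would pass to the affine semigroup $\NN A$ generated by $A = \{\mathbf{e}_i + \mathbf{e}_j : \{i,j\}\in E(G)\}$ and invoke Hochster's theorem: if $\NN A$ is normal, that is $\NN A = \ZZ A \cap \mathbb{R}_{\geq 0}A$, then the semigroup ring $\KG = S/P(G)$ is Cohen--Macaulay. The main obstacle is precisely this normality, and it is the only place where bipartiteness enters in an essential way: one shows that every lattice point of the cone $\mathbb{R}_{\geq 0}A$ is a nonnegative integer combination of elements of $A$ by a combinatorial descent, repeatedly subtracting a generator $\mathbf{e}_i + \mathbf{e}_j$ while remaining inside the cone, and the configurations that would obstruct this descent in an arbitrary graph amount to a pair of vertex-disjoint odd cycles joined by no edge, which cannot occur when $G$ has no odd cycle at all. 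Granting normality, the Cohen--Macaulayness of $P(G)$ follows at once.
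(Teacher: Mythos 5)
Your proposal cannot be compared with an argument in the paper itself, because the paper gives none: Proposition~\ref{prop:basics} is quoted from Villarreal~\cite{v_art}. Measured against the standard proofs in that literature, your plan is correct and is essentially the same route. The decomposition of a kernel binomial into colour-alternating closed walks and the peeling-off induction, the reduction from even closed walks to cycles by splitting at a repeated vertex (where bipartiteness guarantees both pieces are even --- exactly the point where the statement fails for non-bipartite graphs, which need walks built from two odd cycles), the computation $\dim\KG=n-1$ from the rank of the vectors $\mathbf{e}_i+\mathbf{e}_j$, and the chain ``normal semigroup $\Rightarrow$ Cohen--Macaulay'' via Hochster's theorem are precisely Villarreal's arguments. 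Two remarks. First, you rightly observe that part~2 requires $G$ connected (in general $\height P(G)=q-n+c$ with $c$ the number of components); the paper leaves this hypothesis implicit, so flagging it is a point in your favour. Second, the only step you leave genuinely sketchy is normality of $\NN A$, which you justify by appealing to the odd-cycle obstruction to your descent; that characterization (Ohsugi--Hibi, Simis--Vasconcelos--Villarreal) is true but is itself a theorem, and in the bipartite case there is a shorter self-contained route: the incidence matrix of a bipartite graph is totally unimodular, which directly gives $\NN A=\ZZ A\cap\mathbb{R}_{\geq 0}A$. Replacing your sketch by that observation would close the one soft spot without changing the architecture of the proof.
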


We refer the interested reader to the
book~\cite{v} for more details on toric ideals of graphs, and
to~\cite{mi} for a treatment of liaison theory.  
We now recall some definitions from liaison theory that we use throughout the paper.

\begin{definition}
Let $I, J\subset S$ be homogeneous,
unmixed ideals of height $c$. We say that $I$ and $J$ are
\emph{directly G-linked} if there exists a homogeneous, Gorenstein
ideal $H\subset I\cap J$ of height $c$ such that $H:I=J$.  
\emph{G-liaison} is the equivalence relation generated by the relation
of being directly G-linked. 
\end{definition}
It is easy to show that the relation of being directly G-linked is
symmetric.
More precisely, if $H:I=J$ then $H:J=H:(H:I)=I$, 
since all ideals are unmixed of height $c$. 
 
\begin{definition}\label{gc}
Let $J\subset S$ be a homogeneous, saturated ideal. 
We say that $J$ is \emph{Gorenstein in codimension $\leq$ c} if the
localization $(S/J)_P$ is a Gorenstein ring for any prime ideal $P$ of
$S/J$ with $\height P\leq c$. We often say that $J$ is
$G_c$. We call \emph{generically Gorenstein}, or $G_0$, an ideal $J$ which is
Gorenstein in codimension 0.
\end{definition}

\begin{definition}
Let $I_1,I_2\subset S$ be homogeneous, 
unmixed ideals of height $c$. We say that $I_1$ is obtained from $I_2$ by a 
\emph{Basic Double Link} of degree $h$ if there exists a
Cohen-Macaulay ideal $J$ in $S$ of height $c-1$ and a homogeneous $f$ of degree $h$
such that $J\subset I_2$, $f\nmid 0$ modulo $J$, and
$I_1=fI_2+J$. If in addition $J$ is generically Gorenstein, we talk
about \emph{Basic Double G-Link}. 
\end{definition}

\begin{definition}[\cite{ha07}, Sect. 3]\label{bilid}
Let $I_1,I_2\subset S$ be homogeneous, unmixed ideals 
of height $c$. We say that $I_1$ is obtained from $I_2$ by an 
\emph{elementary G-biliaison} of degree $h$ if there exists a
Cohen-Macaulay, generically Gorenstein ideal $J$ in $S$ of height $c-1$
such that $J\subset I_1\cap I_2$ and $I_1/J\cong [I_2/J](-h)$ as
$S/J$-modules. If $h>0$ we speak about \emph{ascending elementary
G-biliaison}.
\emph{G-biliaison} is the equivalence relation generated by elementary G-biliaison.
\end{definition}

Notice that a Basic Double G-Link is a special case of elementary G-biliaison. 
It is easy to show that Basic Double G-Links and elementary
G-biliaisons generate the same equivalence classes, see e.g.~\cite[Remarks~1.13]{gmn}.
The following theorem gives a connection between G-biliaison and G-liaison.

\begin{theorem}[Kleppe, Migliore, Mir\`o-Roig, Nagel,
  Peterson~\cite{kl01}; Hartshorne~\cite{ha07}]\label{bil}

$\quad$ Let $I_1$ be obtained from $I_2$ by an elementary G-biliaison. Then
$I_2$ is G-linked to $I_1$ in two steps.
\end{theorem}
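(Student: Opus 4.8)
The plan is to factor the given elementary G-biliaison into two direct G-links. Let $J$ be the Cohen--Macaulay, generically Gorenstein ideal of height $c-1$ supplied by Definition~\ref{bilid}, so that $J\subseteq I_1\cap I_2$ and $I_1/J\cong [I_2/J](-h)$ as $S/J$-modules. Replacing the pair $(I_1,h)$ by $(I_2,-h)$ when $h<0$ --- which is harmless, since the relation of being directly G-linked, hence G-liaison, is symmetric --- and disposing of the trivial case $h=0$, I may assume $h>0$; that is, the biliaison is ascending.

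\textbf{Step 1: realize the biliaison as a basic double G-link.} Following the theory of generalized divisors of Hartshorne~\cite{ha07}, I would first produce a Cohen--Macaulay, generically Gorenstein ideal $J'$ of height $c-1$ with $J'\subseteq I_2$, together with a homogeneous form $f$ of degree $h$ which is a nonzerodivisor modulo $J'$, such that $I_1=fI_2+J'$. The isomorphism $[I_2/J](-h)\cong I_1/J$ is, at the minimal primes of $S/J$, multiplication by a degree-$h$ unit of the total quotient ring; clearing the denominators of this unit forces one to enlarge $J$ to a larger Cohen--Macaulay, generically Gorenstein ideal $J'$, still contained in $I_2$, after which the isomorphism is realized by multiplication by $f$, and a comparison of the two sides gives $I_1=fI_2+J'$.

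\textbf{Step 2: a basic double G-link is a composition of two direct G-links.} Starting from the presentation $I_1=fI_2+J'$ of Step~1, I would construct two arithmetically Gorenstein ideals $N_1,N_2$ of height $c$ with $J'\subseteq N_1\subseteq I_2$ and $J'\subseteq N_2\subseteq I_1$, in such a way that the intermediate ideal $I':=N_1:I_2$ also satisfies $N_2:I'=I_1$. Then $I_2$ and $I'$ are directly G-linked through $N_1$, and $I'$ and $I_1$ are directly G-linked through $N_2$, which is exactly the conclusion of Theorem~\ref{bil}. The ideal $N_1$ is built from the canonical module $\omega$ of $S/J'$: since $S/J'$ is Cohen--Macaulay and generically Gorenstein, a sufficiently negative graded twist of $\omega$ embeds as a height-one ideal of $S/J'$ whose quotient is Gorenstein, and this embedding can moreover be chosen so that its image lies inside $I_2/J'$; its preimage in $S$ is a height-$c$ Gorenstein ideal $N_1$ with $J'\subseteq N_1\subseteq I_2$. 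One obtains $N_2$ from $N_1$ by the ``multiply the extra generator by $f$'' operation that defines the basic double link, checks that $N_2$ remains Gorenstein of height $c$ because $f$ is a nonzerodivisor modulo $J'$, and verifies the colon identities $N_1:I'=I_2$ (the involutivity of linkage for unmixed ideals of height $c$) and $N_2:I'=I_1$ by a direct computation from $I_1=fI_2+J'$.

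The \textbf{hard part} is the construction in Step~2 of the Gorenstein ideal $N_1$ sandwiched between $J'$ and $I_2$: this is exactly where the generically Gorenstein hypothesis of Definition~\ref{bilid} enters in an essential way, through the canonical module of $S/J'$, and one must at the same time control the position of $N_1$ inside $I_2$ (and of $N_2$ inside $I_1$). Once these two Gorenstein ideals are available, checking that the second link returns precisely $fI_2+J'$ is formal, and Step~1 follows from the generalized divisor machinery of~\cite{ha07} and~\cite{kl01}. Combining Steps~1 and~2 shows that $I_2$ is G-linked to $I_1$ in two steps.
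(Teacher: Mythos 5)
First, note that the paper offers no proof of this statement: it is imported from~\cite{kl01} and~\cite{ha07}, so your proposal can only be measured against the published arguments. Your Step~2 --- constructing an arithmetically Gorenstein ideal $N_1$ of height $c$ squeezed between $J$ and $I_2$ out of the canonical module of the Cohen--Macaulay, generically Gorenstein ring $S/J$, and linking through the residual --- is indeed the crux of the real proof, and you correctly identify it as the hard part. The problem is Step~1.

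You claim that, after enlarging $J$ to some $J'$, the isomorphism $I_1/J\cong[I_2/J](-h)$ can be realized as multiplication by an honest form $f\in S$, so that $I_1=fI_2+J'$; in other words, that every elementary G-biliaison is a single basic double G-link. This is false, and no enlargement of $J$ inside $I_1\cap I_2$ fixes it. The isomorphism extends to multiplication by a degree-$h$ element $f/g$ of the total quotient ring of $S/J$, where $g$ is a nonzerodivisor but in general not a unit; what one actually obtains is $gI_1+J=fI_2+J=:I_3$, i.e.\ $I_3$ is a basic double link of $I_2$ of degree $\deg f$ \emph{and} of $I_1$ of degree $\deg g=\deg f-h$. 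This is precisely the content of~\cite[Remarks~1.13]{gmn}, cited just before the theorem: the two operations generate the same equivalence relation, not the same elementary moves. Feeding $I_3$ into your Step~2 links $I_2$ to $I_1$ in \emph{four} steps, not two. The biliaisons constructed in this very paper are of the genuinely fractional type: in the proof of Theorem~\ref{thm:iso} the isomorphism is multiplication by $g'/g$ with neither factor a unit. (A smaller issue: $h=0$ is not a trivial case --- it says the two divisors are linearly equivalent, not equal, so the two links are still needed.)

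The correct route, which is how \cite{kl01} and \cite{ha07} proceed, bypasses the basic double link entirely. On $X=\mathrm{Proj}(S/J)$ the hypothesis of Definition~\ref{bilid} says $V_1\sim V_2+hH$ as generalized divisors, where $V_i$ is the divisor cut out by $I_i$ and $H$ is the hyperplane class. For $m\gg 0$ one finds an effective divisor $W$ with $V_1+W\in|mH-K|$, $K$ a canonical divisor; then $V_2+W\in|(m-h)H-K|$, and both $V_1+W$ and $V_2+W$ are arithmetically Gorenstein by the twisted anticanonical theorem (your canonical-module construction). The common residual $W$ is the intermediate ideal: $I_{V_1+W}:I_1=I_W$ and $I_{V_2+W}:I_W=I_2$, giving the two links directly. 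So your toolkit is the right one, but the reduction you run it through does not exist.
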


Finally, we recall some basic notions on simplicial complexes.
A {\em simplicial complex} on $[n]$ is a collection of subsets $\Delta\subseteq 2^{[n]}$ such that $G\in\Delta$ for all $G\subseteq F\in\Delta$. 
The simplicial complex $2^{[n]}$ is called a {\em simplex}.
The {\em dimension} of a simplicial complex $\Delta$ is $\dim\Delta=\max\{|F|-1\mid F\in\Delta\}.$
A simplicial complex $\Delta$ is {\em pure} if all its maximal elements with respect to inclusion have the same cardinality.
For any vertex $v\in [n]$ we define the {\em link} of $v$ in $\Delta$, respectively  the {\em deletion} of $v$ from $\Delta$ as
$$\lk_\Delta(v)=\{F\in\Delta\mid v\not\in F, F\cup\{v\}\in\Delta\} ~\mbox{ respectively }~ \Delta\setminus v=\{F\in\Delta\mid v\not\in F\}.$$
The {\em Stanley-Reisner ideal} of $\Delta$ is $I_\Delta=(\prod_{i\in F}x_i\mid F\not\in\Delta)\subset \KK[x_1,\dots,x_n]$.

\begin{definition}
A simplicial complex $\Delta$ is {\em vertex decomposable} if it is either empty, or a simplex, or there exists a vertex $v$ of 
$\Delta$ such that $\lk_\Delta(v)$ and $\Delta\setminus v$ are pure and vertex decomposable, with $\dim\Delta=\dim(\Delta\setminus v)=\dim \lk_\Delta(v)+1$.
\end{definition}

\section{G-biliaison of toric ideals of graphs}\label{main}

Let $G$ be a bipartite graph. In this section we prove that both the toric ideal of $G$ and its initial ideal 
with respect to an appropriate term order belong to the G-biliaison class of a complete intersection. 
We start by establishing a technical lemma.

\begin{lemma}\label{lem:inverseBDL}
Let $H,J\subset S$ be homogeneous ideals, $J\subseteq H$. Assume that $H$ is saturated 
and $J$ is Cohen-Macaulay of height $c-1$. Let $f\in S$ be homogeneous polynomial, $f\nmid 0$ modulo $J$. 
Assume that $I=fH+J$ is Cohen-Macaulay of height $c$. Then $H$ is Cohen-Macaulay of height $c$. 
In particular $I$ is a Basic Double Link of $H$ on $J$.
If in addition $J$ is generically Gorenstein, then $I$ is obtained from $H$ via a Basic Double G-Link.
\end{lemma}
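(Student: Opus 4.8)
The plan is to reduce everything to a statement about Hilbert functions, using the exact sequence that naturally accompanies a Basic Double Link. Since $f\nmid 0$ modulo $J$ and $J$ has height $c-1$, the ideal $fH+J$ sits in the middle of the short exact sequence
\[
0\longrightarrow (S/J)(-h)\xrightarrow{\ \cdot f\ } S/J \oplus (S/H)(-h)\longrightarrow S/(fH+J)\longrightarrow 0,
\]
where $h=\deg f$; here the first map sends $\overline{g}\mapsto(f\overline g,-\overline g)$ and the second is the obvious difference map. One checks exactness directly: the kernel of the second map consists of pairs $(\overline a,\overline b)$ with $a\equiv fb$ modulo $J$, and since $f$ is a nonzerodivisor modulo $J$ such $b$ is uniquely determined, giving the splitting. (This is the standard sequence underlying basic double linkage; see~\cite{mi}.)

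First I would read off the numerics. From the sequence, $\operatorname{rank}$ and dimension count gives $\dim S/(fH+J)=\max\{\dim S/J,\dim S/H\}$, and $\height(fH+J)=c$ forces $\dim S/H=n-c$ (the summand $S/J$ has dimension $n-c+1>n-c$, so it cannot be the one controlling $I$'s height unless $H$ contributes a component of the right dimension; more precisely $I\supseteq J$ and $I/J\cong (S/H)(-h)$ as $S/J$-modules via the sequence, so $\dim S/I=\max\{\dim S/J - 1, \dim S/H\}$, and $\height I=c$ then pins down $\dim S/H=n-c$, i.e.\ $\height H=c$). Next I would get Cohen--Macaulayness of $H$ from depth. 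Applying the depth lemma (or local cohomology) to the short exact sequence: $S/J$ is Cohen--Macaulay of dimension $n-c+1$, and $S/(fH+J)=S/I$ is Cohen--Macaulay of dimension $n-c$ by hypothesis. Since $\operatorname{depth}(S/J)(-h)=n-c+1 > n-c=\operatorname{depth}(S/I)+1$... — more carefully, the sequence reads
\[
0\to (S/J)(-h)\to S/J\oplus (S/H)(-h)\to S/I\to 0,
\]
and since $\operatorname{depth} S/J = n-c+1$ and $\operatorname{depth} S/I = n-c$, the depth lemma applied to the middle term gives $\operatorname{depth}\big(S/J\oplus(S/H)(-h)\big)\geq\min\{\operatorname{depth}(S/J)(-h),\ \operatorname{depth} S/I +1\} = n-c$... — wait, that only gives a lower bound on the min of the two depths; since $\operatorname{depth}(S/J\oplus(S/H)(-h))=\min\{\operatorname{depth} S/J,\operatorname{depth} S/H\}$ and $\operatorname{depth} S/J=n-c+1$, we conclude $\operatorname{depth} S/H\geq n-c$. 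Combined with $\dim S/H=n-c$ from the previous step (and $\operatorname{depth}\leq\dim$ always), we get $\operatorname{depth} S/H = n-c=\dim S/H$, so $H$ is Cohen--Macaulay of height $c$. The hypothesis that $H$ is saturated, together with Cohen--Macaulayness of height $c$, ensures $H$ is unmixed of height $c$, which is what is needed for $I=fH+J$ to legitimately be called a Basic Double Link of $H$ on $J$.

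Finally, once $H$ is known to be Cohen--Macaulay and unmixed of height $c$, the defining conditions of a Basic Double Link are all satisfied: $J$ is Cohen--Macaulay of height $c-1$, $J\subseteq H$, $f$ is a nonzerodivisor modulo $J$, $I=fH+J$, and both $I$ and $H$ are unmixed of height $c$. The last sentence is then immediate: if $J$ is in addition generically Gorenstein, this is by definition a Basic Double G-Link of $H$ on $J$. The one point requiring care — and the main obstacle — is the depth estimate: one must be sure the short exact sequence is exact as stated (this is where $f\nmid 0$ modulo $J$ is used) and then apply the depth lemma in the correct direction, keeping track of the shift $(-h)$ and of the fact that a direct sum's depth is the minimum of the depths, so that the bound on the middle term transfers to $S/H$ rather than getting lost. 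The height computation for $H$ also deserves a clean argument rather than the heuristic above; the cleanest route is to note $I/J\cong (S/H)(-h)$ as $S/J$-modules, whence $\dim S/H = \dim I/J = \dim S/I = n-c$ since $\dim S/J - 1 < n-c$ would contradict... — actually $\dim S/J-1 = n-c$, so one argues instead that $\operatorname{Ann}_{S/J}(I/J)$ corresponds to $H$ and $\dim(S/J)/(I/J) = \dim S/I = n-c$, giving $\dim S/H = n-c$ directly.
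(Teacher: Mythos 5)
Your overall strategy (the quotient-ring exact sequence attached to a basic double link, plus the depth lemma) is sound and genuinely different from the paper's proof, but the dimension bookkeeping contains real errors. The isomorphism $I/J\cong (S/H)(-h)$ is false: multiplication by $f$ gives a surjection $H(-h)\to I/J=(fH+J)/J$ whose kernel is exactly $J(-h)$, because $f\nmid 0$ modulo $J$; hence $I/J\cong (H/J)(-h)$. This sinks the ``cleanest route'' of your last paragraph: $\operatorname{Ann}_{S/J}(I/J)=(J:H)/J$, not $H/J$, and from $0\to I/J\to S/J\to S/I\to 0$ one gets $\dim I/J=\dim S/J=n-c+1$ (since $\dim S/I=n-c<\dim S/J$), so $\dim I/J\neq\dim S/I$. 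Moreover, even granting the formula $\dim S/I=\max\{\dim S/J-1,\dim S/H\}$ (which does not follow from the exact sequence alone, but from the set-theoretic decomposition $V(I)=V(H)\cup V(J+(f))$), the equality $\dim S/J-1=n-c=\dim S/I$ only yields $\dim S/H\leq n-c$, i.e.\ $\height H\geq c$ --- which is immediate anyway from $I\subseteq H$. It does not exclude $\height H>c$, and that is precisely the nontrivial point, the one the paper handles with the scheme-theoretic decomposition $X=Y\cup Z|_f$ and the equidimensionality of $X$. Finally, the ``$+1$'' form of the depth lemma you first invoke is not a valid estimate for the middle term (and would evaluate to $n-c+1$, not $n-c$); only the standard bound $\depth(\mathrm{middle})\geq\min\{\depth(\mathrm{ends})\}$ is available, which does give $\depth S/H\geq n-c$.

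The good news is that your argument closes with a one-line repair, and then it is arguably cleaner than the paper's. Since $I=fH+J\subseteq H$, every minimal prime of $H$ contains $I$, so $\height H\geq\height I=c$, i.e.\ $\dim S/H\leq n-c$; combined with your depth bound $\depth S/H\geq n-c$ and $\depth\leq\dim$, this forces $\depth S/H=\dim S/H=n-c$, so $H$ is Cohen--Macaulay of height $c$ --- the height statement and the Cohen--Macaulayness both fall out of the depth lemma, with no need for the faulty numerics. By contrast, the paper first proves $\height H=c$ geometrically and then gets Cohen--Macaulayness by sheafifying the ideal-level sequence $0\to J(-d)\to J\oplus H(-d)\to I\to 0$ and using vanishing of intermediate cohomology modules; your quotient sequence and the depth lemma encode the same cohomological content in a more elementary way. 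The remaining verifications (exactness of the sequence, which indeed uses $f\nmid 0$ modulo $J$, and checking the definition of Basic Double (G-)Link once $H$ is known to be Cohen--Macaulay, hence unmixed, of height $c$) are fine as you state them.
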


\begin{proof}
Notice that, if $H$ is unmixed and $\height(H)=c$, the result follows from~\cite{mi}, Proposition~5.4.5.
For an arbitrary saturated $H$, denote by $X,Y,Z$ the schemes corresponding to the ideals $I,H,J$ respectively. 
Denote by $Z|_f$ the codimension $c$ scheme whose saturated ideal is 
$J+(f)$. We claim that \begin{equation}\label{union}
X=Y\cup Z|_f.\end{equation} 
Since $I=fH+J\subseteq H\cap[(f)+J]$, it is clear that $X\supseteq Y\cup Z|_f$.
Let $P\notin Y\cup Z|_f$ be a closed point. If $P\notin Z$, then $P\notin X$. If $P\in Z$, then $f(P)\neq 0$. 
Moreover, since $P\notin Y$, there exists $g\in H$ such that $g(P)\neq 0$. Then $fg\in I$ and $(fg)(P)\neq 0$, 
so $P\notin X$. 

Since $X$ is equidimensional of codimension $c$, it follows from (\ref{union}) that $Y$ has codimension at least $c$.
Moreover, any component of $Y$ of codimension $c+1$ or more must be contained (scheme-theoretically) in a component 
of $Z|_f$. Hence, the codimension of $Y$ must be $c$, else we would get $X=Z|_f$, a contradiction. 
This proves that $\height(H)=c$.

To prove that $H$ is Cohen-Macaulay, let $d=\deg f$ and consider the short exact sequence 
\begin{equation}\label{sesJHI}
0\longrightarrow J(-d)\longrightarrow J\oplus H(-d)\longrightarrow I\longrightarrow 0.
\end{equation}

Denote by $\mathcal{J},\mathcal{H},\mathcal{I}$ the sheafification of $J,H,I$ respectively. 
It is well-known (see e.g.~\cite{mi}, Lemma~1.2.3) that $H$ is 
Cohen-Macaulay if and only if $$H^i_*(\mathcal{H})=\bigoplus_{m\in\ZZ}H^i_*(\mathcal{H}(m))=0 \;\mbox{ for }
1\leq i\leq \dim S-c-1.$$
Sheafifying and taking cohomology of (\ref{sesJHI}), we get the long exact sequence
$$\ldots\longrightarrow H^i_*(\mathcal{J})(-d)\longrightarrow H^i_*(\mathcal{J})\oplus 
H^i_*(\mathcal{H})(-d)\longrightarrow H^i_*(\mathcal{I})\longrightarrow\ldots$$
Since $H^i_*(\mathcal{J})=0$ for $1\leq i\leq\dim S-c$ and $H^i_*(\mathcal{I})=0$ for $1\leq i\leq\dim S-c-1$, 
it must be $H^i_*(\mathcal{H})=0$ for $1\leq i\leq\dim S-c-1$, hence $H$ is Cohen-Macaulay.

Since $I=fH+J$, $\height(J)+1=\height(H)$, $f\nmid 0$ modulo $J$, and $J$ is Cohen-Macaulay, it follows 
that $I$ is a Basic Double Link of $H$ on $J$. 
\end{proof}

We now introduce the concept of (maximal) path ordered matching, 
which is a special case of the ordered matchings introduced in \cite{CV}. 
Its relevance for our arguments is clarified by Theorem~\ref{thm:iso} and Lemma~\ref{lem:x}.

\begin{definition}\label{def:OPM}
A set of edges $\ee = \{e_1,\dots,e_r\} \subset E(G)$  is a \emph{path ordered matching} of length $r$, if the vertices can be relabeled such that  $e_{i} = \{i,i+r\}$ and
the following conditions are satisfied:
\begin{itemize}
\item[(a)] $f_i=\{i,i+1+r\} \in E(G)$ for every $i = 1,\dots, r-1$, 
\item[(b)]  if $\{i, j+r\} \in E(G)$, then $j\ge i$. 
\end{itemize}
We call such a matching \emph{maximal} if it is not a proper subset of any other path ordered matching. 
\end{definition}

\begin{example}
Figure \ref{fig:POM5} represents a path ordered matching of cardinality 5. 
The vertical black edges are the edges $e_1,\dots, e_5$ of the matching, 
and the black skew edges are $f_1,\ldots,f_4$. 
The green edges are all the edges which satisfy point (b) in Definition~\ref{def:OPM}, 
while the red edges are all the edges which do not satisfy point (b). 
\end{example}
\begin{figure}[H]
	\begin{tikzpicture}[scale=0.8]
\node at (0,2.4) {1};\node at (-.1,-.4) {6};
\node at (2,2.4) {2};\node at (1.9,-.4) {7};
\node at (4,2.4) {3};\node at (3.9,-.4) {8};
\node at (6,2.4) {4};\node at (5.9,-.4) {9};
\node at (8,2.4) {5};\node at (7.9,-.4) {10}; 

\draw [black, line width=2] (-1,-0.95) -- (-.5,-0.95);
\node[anchor= west] at (-.4,-.96) {\textup{must exist;}};
\draw[color=green!50!black, line width=2] (2.5,-0.95) -- (3,-0.95);
\node[anchor= west] at (3.1,-.96) {\textup{may exist;}};
\draw [red!80!black, line width=2] (6,-0.95) -- (6.5,-0.95);
\node[anchor= west] at (6.6,-.96) {\textup{cannot exist;}};

\draw[thick, green!50!black] (0,2)--(4,0)--(0,2)--(6,0)--(0,2)--(8,0);
\draw[thick, green!50!black] (2,2)--(6,0)--(2,2)--(8,0);
\draw[thick, green!50!black] (4,2)--(8,0);

\draw[thick, red!80!black] (8,2)--(6,0)--(8,2)--(4,0)--(8,2)--(2,0)--(8,2)--(0,0);
\draw[thick, red!80!black] (6,2)--(4,0)--(6,2)--(2,0)--(6,2)--(0,0);
\draw[thick, red!80!black] (4,2)--(2,0)--(4,2)--(0,0);
\draw[thick, red!80!black] (2,2)--(0,0);
 
 \draw[ultra thick] (0,0)--(0,2)--(2,0)--(2,2)--(4,0)--(4,2)--(6,0)--(6,2)--(8,0)--(8,2);
 \fill[black] (0,0) circle (.8ex) 
                 (2,0) circle (.8ex)
                 (4,0) circle (.8ex)
                 (6,0) circle (.8ex)
                 (8,0) circle (.8ex)
                 (0,2) circle (.8ex)
                 (2,2) circle (.8ex)
                 (4,2) circle (.8ex)
                 (6,2) circle (.8ex)
                 (8,2) circle (.8ex); 
 
 \end{tikzpicture}\\
 \caption{Path ordered matching of length 5.}
 \label{fig:POM5}
 \end{figure}
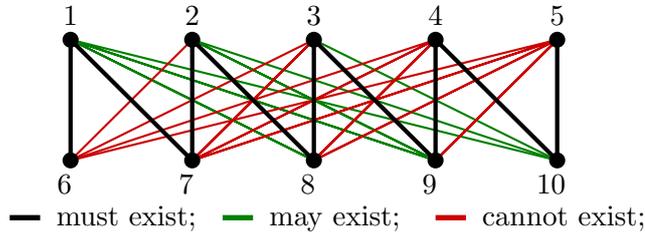

To every path ordered matching in $G$ we may associate a set of monomials as follows.

\begin{definition}\label{Amonom}
Let $\ee=\{e_1,\dots,e_r\}$ be a path order matching in $G$.
Define
$$M^{G}_\ee= \{ m\in S\mid  m\textrm{ monomial, } m\prod_{i\in\I}e_i - n=T_{\ww}
\text{ where } \emptyset\neq\I \subseteq [r], \ww\in\C(G), \text{and $n$ monomial}\}.$$
\end{definition}

\begin{remark}\label{rem:cycles=walks}
The monomials coming from even cycles or even closed walks generate the same ideal. More precisely
$$\left(M^G_\ee\right) = 
\left(
\begin{array}{ll}
 m\in S ~:&
\begin{array}{l}
m\textrm{~ is a  monomial with  } m\prod_{i\in\I}e_i - n=T_{\ww}\\
\text{for some } \emptyset\neq\I \subseteq [r]  
\text{ and some $\ww$ even closed walk in $G$}
 \end{array}
\end{array}
\right).
$$
\end{remark}

\begin{proof}
Let $\ww$ be an even closed walk in $G$ with $T_\ww = m\prod_{i\in\I}e_i - n$.
We regard $\ww$ as subgraph of $G$. By Euler's classical result, all local degrees in $\ww$ have to be even. If all local degrees are two, then $\ww$ is a cycle. Otherwise, we choose a vertex $v$ of degree greater than or equal to $4$ and split $\ww$ in two shorter closed walks, each starting at $v$. Since both are subwalks of $\ww$, one of them gives rise to a monomial which divides $m$. 
We conclude by induction on $\sum_{v\in \ww} \rho(v)$.
\end{proof}

Given a graph $G$ and a  path order matching $\ee=\{e_1,\dots,e_r\}$ in $G$, we consider the ideal
\begin{equation}\label{IrG}
I_\ee^G = P(G\setminus \ee)+(M^G_\ee).
\end{equation}

We now establish some properties of $I_\ee^G$. 
We start by showing that its natural set of generators is a lexicographic Gr\"obner basis. 

\begin{lemma}\label{lem:GB}
Let $G$ be a bipartite graph and $\ee=\{e_1,\dots,e_r\}$ a path order matching in $G$. 
Assume that $\ee^\prime=\{e_1,\dots,\widehat{e_s},\dots, e_r\}$ is a path order matching and let 
$\tau$ be a lexicographic term order on $S$ with $e_s>e_i$ for $i\neq s$, $e_i>f$ 
for all $i$ and all $f\in E(G)\setminus\{e_1,\dots,e_r\}$. The set 
$$\{T_\ww~:~\ww \in \C(G\setminus \ee)\} \cup M^G_\ee$$
is a Gr\"obner basis of $I_\ee^G$ with respect to $\tau$.
\end{lemma}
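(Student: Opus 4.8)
The plan is to use Buchberger's criterion: I must show that all $S$-polynomials of pairs of elements in the proposed generating set $\mathcal{G} = \{T_\ww : \ww \in \C(G\setminus\ee)\} \cup M^G_\ee$ reduce to zero modulo $\mathcal{G}$ with respect to $\tau$. First I would record the leading terms. For $\ww \in \C(G\setminus\ee)$ the binomial $T_\ww$ involves only variables $f \in E(G)\setminus\{e_1,\dots,e_r\}$ (since $\ee \subseteq G\setminus(G\setminus\ee)$ means these cycles avoid all of $e_1,\dots,e_r$), so its leading term is whichever of the two monomials is larger in the lex order restricted to those small variables; this is exactly the leading term appearing in a Gröbner basis of $P(G\setminus\ee)$ with respect to the induced order, and by Proposition 1.3(1) together with the standard theory of toric ideals of bipartite graphs these $T_\ww$ (for $\ww$ ranging over even cycles) already form a Gröbner basis of $P(G\setminus\ee)$ — I would either cite this or reduce to the even-cycle generation in Proposition 1.3 plus a known Gröbner basis result. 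For $m \in M^G_\ee$, by definition $m\prod_{i\in\I}e_i - n = T_\ww$ for some even closed walk $\ww$ and monomial $n$; since $e_s$ is the largest variable and $\ee' = \ee\setminus\{e_s\}$ is still a path ordered matching, I expect that the leading term of $m\prod_{i\in\I}e_i - n$ is $m\prod_{i\in\I}e_i$ precisely when $s \in \I$, and in general that the monomial $m\prod_{i\in\I}e_i$ is the $\tau$-leading term. The key combinatorial claim, which I would isolate as the heart of the argument, is that the leading term of the binomial associated to such an $m$ is the term containing the matching edges $e_i$, $i \in \I$ — this follows from the ordering $e_i > f$ for $f$ outside the matching, and condition (b) of Definition 1.8 which controls which "cross" edges can appear.

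Next, for Buchberger's criterion I would split the $S$-polynomial checks into three cases. Case 1: both elements lie in $\{T_\ww : \ww \in \C(G\setminus\ee)\}$ — here the $S$-polynomial reduces to zero because these form a Gröbner basis of $P(G\setminus\ee)$ in the induced order and reduction inside this subset is the same whether computed in $S$ or in the smaller polynomial ring. Case 2: both elements lie in $M^G_\ee$ — say they come from monomials $m, m'$ with index sets $\I, \I'$. Case 3: one element is some $T_\ww$ with $\ww\in\C(G\setminus\ee)$ and the other is an element of $M^G_\ee$. In Cases 2 and 3 the crucial simplifying observation is that the leading terms in $M^G_\ee$ all contain at least one variable $e_i$ with $i \in [r]$, whereas the variables $e_i$ are "generic" with respect to the cycles in $G\setminus\ee$; when two leading terms share a variable, I would either exhibit an explicit syzygy coming from a common even closed walk (concatenating or splitting walks as in the proof of Remark 1.12) or invoke the product criterion (coprime leading terms give $S$-polynomials reducing to zero, Buchberger's first criterion) when the leading monomials are relatively prime.

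The main obstacle I anticipate is Case 2: two monomials $m, m' \in M^G_\ee$ whose leading terms $m\prod_{i\in\I}e_i$ and $m'\prod_{j\in\I'}e_j$ share variables, so the product criterion does not apply. Here I would need to produce, from the two even closed walks $\ww, \ww'$ witnessing membership of $m$ and $m'$, a third even closed walk (or a short combination of cycles) whose associated binomial realizes the required reduction of the $S$-polynomial; concretely, the $S$-polynomial should rewrite, using the binomials $T_\ww$ and $T_{\ww'}$, into a multiple of some $T_{\ww''}$ with $\ww''$ again giving an element of $M^G_\ee$ or a cycle of $G\setminus\ee$. Making this walk-surgery precise — showing the resulting walk still has an even cycle component hitting the matching in the right way, and that no new leading terms outside $\operatorname{LT}_\tau(\mathcal{G})$ are created — is the technical core, and I would handle it by the same inductive splitting on $\sum_{v}\rho(v)$ used in Remark 1.12, tracking which matching edges $e_i$ survive under condition (b). I expect Cases 1 and 3 to be comparatively routine once the leading-term description is nailed down.
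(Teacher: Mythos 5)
Your plan misreads the structure of the proposed Gr\"obner basis, and the misreading puts the difficulty in the wrong place. The elements of $M^G_\ee$ are \emph{monomials} $m$ (the binomial $T_{\ww'}=m\prod_{i\in\I}e_i-n$ only witnesses membership); they are not the binomials $T_{\ww'}$ themselves. Consequently: each element of $M^G_\ee$ is its own $\tau$-leading term, so your ``key combinatorial claim'' about which side of $T_{\ww'}$ is the leading term is not needed --- and, as stated, it is false in general, since a cycle may meet matching edges on both sides (this is exactly the situation treated in Remark~\ref{rem:GB}); your Case~2 (two elements of $M^G_\ee$) is trivial rather than the ``main obstacle'', because the $S$-polynomial of two monomials is identically zero; and the binomials $T_{\ww'}$ you propose to rewrite with need not even lie in $I_\ee^G$, so they cannot be used in the reduction. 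The only nontrivial pairs are the mixed ones, which you declare ``comparatively routine'': for $\ww\in\C(G\setminus\ee)$ with $T_\ww=m-n$, $\ini_\tau(T_\ww)=m$ (both $m$ and $n$ are supported away from $e_1,\dots,e_r$), and a monomial $m'\in M^G_\ee$ with $\gcd(m,m')\neq 1$, the $S$-polynomial is itself a monomial, $m_1'n$ with $m_1'=m'/\gcd(m,m')$, and the whole content of the lemma is to show that this monomial lies in $(M^G_\ee)$.

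The missing idea is the explicit walk surgery that proves this: fix $i_0\in\I$ and the direction on $\ww'$ crossing $e_{i_0}$, decompose $\gcd(m,m')$ into maximal common paths $\aa_1,\dots,\aa_t$ of $\ww$ and $\ww'$, then build a closed even walk $\zz$ by following $\ww'$ from $i_0$ until the first common path, switching to $\ww$ (on the side contributing to $n$, not $m$), and returning along $\ww'$ after the last common path. One checks $T_\zz=e_{i_0}m''\prod_{i\in\J}e_i-n''$ with $m''\mid m_1'n$, and Remark~\ref{rem:cycles=walks} converts this closed-walk datum into divisibility by an element of $M^G_\ee$, i.e.\ reduction to zero. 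Your proposal gestures at ``splitting/concatenating walks as in Remark~\ref{rem:cycles=walks}'', but aims it at the monomial--monomial pairs where nothing needs proving, and gives no argument that the surgery produces a monomial dividing the mixed $S$-polynomial --- which is the technical core of the paper's proof. Your Case~1 is handled as in the paper, by citing that the even-cycle binomials form a Gr\"obner basis of $P(G\setminus\ee)$, and that part is fine.
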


\begin{proof}
Each of the two sets in the above union is a $\tau$-Gr\"obner basis of the ideal that it generates by \cite{v}, Prop. 10.1.11. So it suffices to show  that the S-polynomials for mixed pairs rewrite to zero. Let $\ww \in \C(G\setminus \ee)$ with $T_\ww = m-n$,  and $\ww^\prime \in \C(G)$ with $T_{\ww^\prime}= m\prime\prod_{i\in\I}e_i - n\prime$, $\I\neq\emptyset$. Assume that $\ini_\tau(T_\ww)=m$, and that $m$ and $m^\prime$ are not coprime, that is 
$$m = q_1\dots q_t m_1,\quad m^\prime =q_1\dots q_t m_1^\prime,\quad (m_1,m_1^\prime)=1,$$
where each monomial $q_i\neq 1$ comes from a maximal path $\aa_i$ in the intersection of $\ww$ and $\ww'$. The S-polynomial of $T_\ww$ and $m^\prime$ is $S(T_\ww,m^\prime) = m_1^\prime T_\ww - m_1m^\prime = m_1^\prime n.$ We claim that  $S(T_\ww,m^\prime)\in (M^G_\ee)$. 
Fix  $i_0\in \I$, and the walking direction on $\ww^\prime$ which goes on $e_{i_0}$ from $i_0+r$ to $i_0$.
Assume that, when walking on $\ww^\prime$ starting at $i_0$, we encounter first $\aa_1$, then $\aa_2$ and so on. Consider the following closed even walk. We start walking on $\ww^\prime$ at $i_0$. As soon as we reach the first vertex of $\aa_1$, start going on $\ww$. Keep going on $\ww$ until we reach the vertex of $\aa_t$ which is last in the walking in direction on $\ww^\prime$. From here, keep walking back on $\ww^\prime$ until we reach $i_0$ again. Call this closed walk $\zz$, and let $T_\zz = e_{i_0}m^{\prime\prime}\prod_{i\in\J}e_i - n^{\prime\prime}$ be the corresponding binomial. The part walked on $\ww^\prime$ contributes to $m^{\prime\prime}$ with variables dividing $m_1^\prime$. Moreover, because of our choice of following $\ww$ at the intersection with $\aa_1$, the walk on $\ww$ contributes with indeterminates dividing $n$ (and not $m$). Thus $m^{\prime\prime}\mid m_1^\prime n$, and we conclude by Remark \ref{rem:cycles=walks}. 
\end{proof}

\begin{remark}\label{rem:GB}
Each element in the above Gr\"obner basis corresponds to a cycle in $G$. If we only consider the generators corresponding to cycles $\ww$ for which at least one of the two monomials in $T_\ww = m-n$ is not divisible by any $e_i$, we still obtain a Gr\"obner basis. 
\end{remark}

\begin{proof}
If there exist $i$ and $j$ such that $e_i\mid m$ and $e_j\mid n$, then $\ww$ produces two monomials in $M_\ee^G$. Using $\ww$ and the path of the matching, it is easy to construct two shorter cycles $\ww^\prime$ and $\ww^{\prime\prime}$, such that the corresponding monomials divide $m$ and $n$, respectively. 
\end{proof}
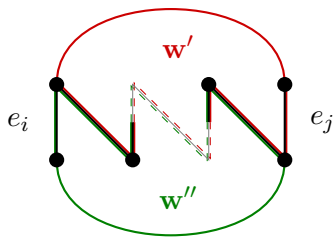
\begin{figure}[H]
\begin{tikzpicture}
\node at (-0.5,0.5) {$e_i$};\node at (3.5,0.5) {$e_j$};
\node[red!80!black] at (1.6,1.5) {$\ww^\prime$};
\node[green!50!black] at (1.6,-0.5) {$\ww^{\prime\prime}$};
\draw[red!80!black, thick] (0,1) to [out=90,in=180] (1.5,2) to [out=0, in=90] (3,1);
\draw[red!80!black, thick] (0.02,1.02)--(1.02,0.02)--(1.02,0.5);
\draw[red!80!black, thick](2.02,0.5)--(2.02,1.02)--(3.02,0.02)--(3.02,1.02);
\draw[red!80!black, dashed] (1.02,0.5)--(1.02,1.02)--(2.02,0.02)--(2.02,0.5);

\draw[green!50!black, thick] (0,0) to [out=270,in=180] (1.5,-1) to [out =0, in = 270]  (3,0);
\draw[green!50!black, thick] (-0.02,-0.02)--(-0.02,0.98)--(0.98,-0.02)--(0.98,0.5);
\draw[green!50!black, thick] (1.98,0.5)--(1.98,0.98)--(2.98,-0.02);
\draw[green!50!black, dashed] (0.98,0.5)--(0.98,0.98)--(1.98,-0.02)--(1.98,0.5);

 \draw[thick] (0,0)--(0,1)--(1,0)--(1,0.5);
 \draw[thick] (2,0.5)--(2,1)--(3,0)--(3,1);
 \draw[help lines] (1,0.5)--(1,1)--(2,0)--(2,0.5);
 \fill[black] (0,0) circle (.6ex) 
                 (0,1) circle (.6ex)
                 (1,0) circle (.6ex)
                 (2,1) circle (.6ex)
                 (3,0) circle (.6ex)
                 (3,1) circle (.6ex);

 \end{tikzpicture}\\
 \caption{$\ww$ is the cycle which goes through $e_i$, the red arch, $e_j$ and the green arch.}
 \label{fig:GB}
 \end{figure} 

Our first liaison result concerns the G-biliaison class of the initial ideals of the ideals $I_\ee^G$.

\begin{theorem}\label{thm:bdlinid}
Let $G$ be a bipartite graph and let $\ee=\{e_1,\dots,e_r\}$ be a path order matching.
Let $\tau$ be a lexicographic term order on $S$ with $e_r > e_{r-1} > \dots > e_1 > f$ for all $f\in E(G)\setminus \ee$. 
The initial ideal $\inid_\tau(I_\ee^G)$ of $I_\ee^G$ with respect to $\tau$ is Cohen-Macaulay and squarefree, and it can be obtained from $\inid_\tau(P(G))$ via a sequence of $r$ descending G-biliaisons.
\end{theorem}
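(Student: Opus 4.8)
The plan is to induct on $r$, the length of the path order matching. When $r=0$ the matching is empty, $I_\ee^G=P(G)$, and there is nothing to prove. For the inductive step, the idea is to "peel off" the largest edge $e_r$ and compare $\inid_\tau(I_\ee^G)$ with $\inid_\tau(I_{\ee'}^G)$, where $\ee'=\{e_1,\dots,e_{r-1}\}$. First I would check that $\ee'$ is again a path order matching: conditions (a) and (b) of Definition~\ref{def:OPM} are inherited after deleting the last edge and relabeling (the vertices $i$ and $i+r$ are simply renamed, and the edges $f_1,\dots,f_{r-2}$ survive), so the inductive hypothesis applies to $\ee'$ and gives that $\inid_\tau(I_{\ee'}^G)$ is Cohen-Macaulay, squarefree, and obtained from $\inid_\tau(P(G))$ by $r-1$ descending G-biliaisons. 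It remains to produce one more descending G-biliaison from $\inid_\tau(I_{\ee'}^G)$ to $\inid_\tau(I_\ee^G)$.

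The heart of the argument is to identify this last step as a Basic Double Link, using Lemma~\ref{lem:inverseBDL} in its contrapositive form: I would exhibit a Cohen-Macaulay ideal $J$ of height $c-1$ (where $c=\height I_\ee^G$), a linear form $f$ (here $f=e_r$, of degree $1$), and a saturated ideal $H$ with $J\subseteq H$, $e_r\nmid 0$ modulo $J$, such that
\[
\inid_\tau(I_{\ee'}^G)\ =\ e_r\cdot H + J,
\]
and conclude that $H$ is Cohen-Macaulay and that $\inid_\tau(I_{\ee'}^G)$ is a Basic Double Link of $H$ on $J$; the natural candidate is $H=\inid_\tau(I_\ee^G)$ up to the variable $e_r$, so that the G-biliaison from $\inid_\tau(I_{\ee'}^G)$ down to $\inid_\tau(I_\ee^G)$ is the inverse of this Basic Double Link and is descending (degree $-1$). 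The ideal $J$ should be $\inid_\tau$ of $P(G\setminus\ee')$ together with the monomials in $M^G_{\ee'}$ that do \emph{not} involve $e_r$ — equivalently, the part of the Gröbner basis from Lemma~\ref{lem:GB} (as simplified by Remark~\ref{rem:GB}) that is not divisible by $e_r$ — and I must verify $J$ is Cohen-Macaulay of the right height, which I expect to follow by recognizing $J$ as (the initial ideal of) $I_{\ee''}^{G'}$ for a suitable smaller graph/matching, again via the inductive setup, and that it is generically Gorenstein because it is a squarefree monomial ideal whose simplicial complex is unmixed — or, more robustly, because $S/J$ is Cohen-Macaulay and $J$ is radical, hence $G_0$.

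The key combinatorial input is Lemma~\ref{lem:GB}: it tells me the displayed set is a $\tau$-Gröbner basis of $I_\ee^G$, so $\inid_\tau(I_\ee^G)$ is generated by the leading terms of $\{T_\ww : \ww\in\C(G\setminus\ee)\}$ together with the monomials in $M^G_\ee$, and these leading terms are squarefree because each $T_\ww$ for a cycle is squarefree and the monomials in $M^G_\ee$ can be taken squarefree (Remark~\ref{rem:GB}). Squarefreeness of $\inid_\tau(I_\ee^G)$ is then immediate. The bookkeeping I expect to be the main obstacle is the precise matching of generators: showing that every generator of $\inid_\tau(I_{\ee'}^G)$ is either already in $J$, or is $e_r$ times a generator of $\inid_\tau(I_\ee^G)$, or is a generator of $\inid_\tau(I_\ee^G)$ itself. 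Concretely, a monomial $m\prod_{i\in\I'}e_i$ arising from $M^G_{\ee'}$ with $\I'\subseteq[r-1]$ either remains a generator coming from $M^G_\ee$ (if it does not "simplify" using $e_r$), or, when the cycle $\ww$ also passes through $e_r$, factors as $e_r$ times a shorter monomial that lies in $M^G_\ee$ — this is exactly the shortening-of-cycles manipulation used in the proofs of Remark~\ref{rem:cycles=walks} and Remark~\ref{rem:GB}, applied now with the edge $e_r$ and the path $f_1,\dots,f_{r-1}$ of the matching. Once this dictionary is established, Lemma~\ref{lem:inverseBDL} delivers the Cohen-Macaulayness of $H=\inid_\tau(I_\ee^G)$ and the descending G-biliaison, and the induction closes.
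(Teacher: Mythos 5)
Your plan is, in all essentials, the paper's proof: induct on $r$; establish the decomposition $\inid_\tau(I_{\ee'}^G)=e_r\,\inid_\tau(I_\ee^G)+\inid_\tau(I_{\ee'}^{G\setminus e_r})$, where the correct $J$ is precisely $\inid_\tau(I_{\ee'}^{G\setminus e_r})$ (your first description of $J$ via $\inid_\tau$ of $P(G\setminus\ee')$ is slightly off, since that would contain $e_r$-divisible generators, but your ``equivalently'' clause and your suggestion to recognize $J$ as the initial ideal of a smaller instance $I_{\ee'}^{G\setminus e_r}$, covered by the induction, is exactly right); then apply Lemma~\ref{lem:inverseBDL} with $f=e_r$ to conclude that $H=\inid_\tau(I_\ee^G)$ is Cohen--Macaulay of height $c$ and that $\inid_\tau(I_{\ee'}^G)$ is a Basic Double G-Link of it on $J$, hence the step down to $\inid_\tau(I_\ee^G)$ is a descending elementary G-biliaison of degree $-1$; squarefreeness comes from Lemma~\ref{lem:GB}, and the generator bookkeeping you postpone is done in the paper exactly with the tools you name (Remark~\ref{rem:GB} and the cycle-splitting trick). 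One caveat on that bookkeeping: your trichotomy (``in $J$, or $e_r$ times a generator of $\inid_\tau(I_\ee^G)$, or a generator of $\inid_\tau(I_\ee^G)$ itself'') is not quite the right statement --- the third alternative does not by itself give membership in $e_r\inid_\tau(I_\ee^G)+J$, and one must also prove the reverse inclusion $e_r\inid_\tau(I_\ee^G)\subseteq\inid_\tau(I_{\ee'}^G)$, which is where Remark~\ref{rem:GB} (normalizing cycles so that one side is free of matching edges) is actually needed.

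There is, however, one genuine gap: the base case. You assert that for $r=0$ ``there is nothing to prove,'' but at $r=0$ the theorem claims that $\inid_\tau(P(G))$ is Cohen--Macaulay, and this is the one piece of Cohen--Macaulayness that the inductive machinery cannot supply: Lemma~\ref{lem:inverseBDL} only transfers Cohen--Macaulayness from $\inid_\tau(I_{\ee'}^G)$ and $\inid_\tau(I_{\ee'}^{G\setminus e_r})$ (known by induction) down to $\inid_\tau(I_\ee^G)$, so every Cohen--Macaulayness statement in the induction flows from the $r=0$ case. The paper must invoke an external result here, namely \cite[Theorem 9.5.10]{DRS} (see also \cite[Corollary 9.6.2]{v}), resting on properties of triangulations of the edge polytope of a bipartite graph; squarefreeness at $r=0$ does follow from Lemma~\ref{lem:GB}, but Cohen--Macaulayness does not. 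Without supplying this input your induction never gets started, so you should add this citation (or an independent argument) to the base case.
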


\begin{proof}
By Lemma~\ref{lem:GB}
$$\inid_\tau(I_{\{e_1,\dots,e_s\}}^G)=(\inid_{\tau}(T_\ww)~:~\ww \in \C(G\setminus \{e_1,\dots,e_s\})\})+(M^G_{\{e_1,\dots,e_s\}})$$ for every $0\leq s\leq r$. In particular, $\inid_\tau(I_{\{e_1,\dots,e_s\}}^G)$ is a squarefree monomial ideal. 

We proceed by induction on $r\geq 0$. Since $I_0^G=P(G)$, the thesis is true for $r=0$. 
Cohen-Macaulayness of $\inid_\tau(P(G))$ follows from \cite[Theorem 9.5.10]{DRS} (see also \cite[Corollary 9.6.2]{v}).  
Assume now that the thesis holds for any bipartite graph and for path order matchings of up to $r-1$ edges. Let $\ee^\prime=\{e_1,\dots,e_{r-1}\}$. We claim that
\begin{equation}\label{eq:bdlinid}
\inid_\tau(I_{\ee^\prime}^G)=e_r \inid_\tau(I_\ee^G)+\inid_\tau(I_{\ee^\prime}^{G\setminus e_r}).
\end{equation}
In fact, let $\ww\in\C(G)$. If $\ww\in\C(G\setminus \ee)$, then $\inid_\tau(T_\ww)\in\inid_\tau(I_{\ee^\prime}^{G\setminus e_r})$. If $\ww\in\C(G\setminus e_r)$ passes through some of $e_1,\dots,e_{r-1}$, then $T_\ww=\prod_{i\in\I}e_im-n$ and 
$m\in\inid_\tau(I_{\ee^\prime}^{G\setminus e_r})$. If $\ww\in\C(G\setminus \ee^\prime)$ is a cycle through $e_r$, then $T_\ww=e_rm-n$ and $\inid_\tau(T_\ww)=e_rm\in\inid_\tau(I_{\ee^\prime}^G)$. Moreover $m\in (M^G_\ee)\subseteq\inid_\tau(I_\ee^G)$, hence $e_rm\in e_r\inid_\tau(I_\ee^G)$.
Finally, if $\ww\in\C(G)$ is a cycle through $e_r$ and some of $e_1,\dots,e_{r-1}$, then $T_\ww=\prod_{i\in\I}e_i m-n$ where $\I\supseteq\{r\}$. 
By Remark~\ref{rem:GB} we may assume that $n$ is not divisible by any of the $e_j's$. Then $\I\neq \{r\}$, so $e_rm\in (M^G_{\ee^\prime})\subseteq\inid_\tau(I_{\ee^\prime}^G)$ and $m\in\inid_\tau(I_\ee^G)$. This concludes the proof of (\ref{eq:bdlinid}).

By induction hypothesis $\inid_\tau(I_{\ee^\prime}^{G\setminus e_r})$ and $\inid_\tau(I_{\ee^\prime}^G)$ 
are Cohen-Macaulay and squarefree of height $c-1$ and $c$ respectively, if $c=\height P(G)$. 
The ideal $\inid_\tau(I_{\ee^\prime}^{G\setminus e_r})$ is squarefree, hence generically Gorenstein.
Combining Lemma~\ref{lem:inverseBDL} and (\ref{eq:bdlinid}), one sees that $\inid_\tau(I_\ee^G)$ is Cohen-Macaulay of height $c$
and $\inid_\tau(I_{\ee^\prime}^G)$ is obtained from $\inid_\tau(I_\ee^G)$ 
via a Basic Double G-Link of degree $1$. Hence $\inid_\tau(I_\ee^G)$ is obtained from $\inid_\tau(I_{\ee^\prime}^G)$ 
via an elementary G-biliaison of degree $-1$. 
\end{proof}

\begin{remark}\label{rem:bdlinid}
Let $\ee=\{e_1,\dots,e_r\}$ and $\ee^\prime=\{e_1,\dots,\widehat{e_s},\dots, e_r\}$ be path order matchings in $G$.
Let $\tau$ be a lexicographic term order on $S$ with $e_s>e_i$ for $i\neq s$, $e_i>f$ for all $i$ and all 
$f\in E(G)\setminus\{e_1,\dots,e_r\}$. The same proof as in Theorem~\ref{thm:bdlinid} shows that 
$$\inid_\tau(I_{\ee^\prime}^G)=e_s \inid_\tau(I_\ee^G)+\inid_\tau(I_{\ee^\prime}^{G\setminus e_s})$$
and that $\inid_\tau(I_{\ee^\prime}^G)$ is obtained from $\inid_\tau(I_\ee^G)$ via a Basic Double G-Link of degree $1$ on $\inid_\tau(I_{\ee^\prime}^{G\setminus e_s})$.
\end{remark}

\begin{corollary}\label{cor:CM}
Let $G$ be a bipartite graph and $\ee=\{e_1,\dots,e_r\}$ be a path order matching.
The ideal $I_\ee^G$ is radical and Cohen-Macaulay, of the same height as $P(G)$.
\end{corollary}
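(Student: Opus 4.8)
\textbf{Proof plan for Corollary~\ref{cor:CM}.}
The plan is to deduce both statements from the corresponding facts about the initial ideal, established in Theorem~\ref{thm:bdlinid}. First I would recall the standard principle that $I^G_\ee$ and $\inid_\tau(I^G_\ee)$ share the same Hilbert function, hence the same height; since Theorem~\ref{thm:bdlinid} identifies $\inid_\tau(I^G_\ee)$ as obtained from $\inid_\tau(P(G))$ by a sequence of elementary G-biliaisons, which preserve height, we get $\height I^G_\ee=\height \inid_\tau(P(G))=\height P(G)$. For Cohen-Macaulayness I would invoke the well-known fact that if $\inid_\tau(I)$ is Cohen-Macaulay then so is $I$ (for any homogeneous ideal and any term order); Theorem~\ref{thm:bdlinid} gives Cohen-Macaulayness of $\inid_\tau(I^G_\ee)$, so $S/I^G_\ee$ is Cohen-Macaulay.

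The remaining point — and the one I expect to require the most care — is radicality of $I^G_\ee$. Here the strategy is again to transfer from the initial ideal: Theorem~\ref{thm:bdlinid} tells us $\inid_\tau(I^G_\ee)$ is a \emph{squarefree} monomial ideal, in particular radical. The passage ``$\inid_\tau(I)$ radical $\Rightarrow$ $I$ radical'' is not formal in general, but it does hold, and I would justify it as follows. Since $\inid_\tau(I^G_\ee)$ is squarefree it is in particular reduced, so $S/\inid_\tau(I^G_\ee)$ has no embedded primes and is generically reduced; a standard argument (see, e.g., Conca--Varbaro-type results, or the classical fact that a radical initial ideal forces the ideal to be radical because the Hilbert functions of $S/I$ and $S/\sqrt{I}$ would otherwise differ while both are squeezed between those of $S/\inid_\tau(I)$ and itself) then yields that $I^G_\ee$ is radical. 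Concretely: $I^G_\ee\subseteq \sqrt{I^G_\ee}$, and $\inid_\tau(I^G_\ee)\subseteq \inid_\tau(\sqrt{I^G_\ee})$; since the left-hand side is already radical and the two ideals have equal Hilbert function (both equal to that of $I^G_\ee$, as $\sqrt{I^G_\ee}$ and $I^G_\ee$ agree in codimension related to the Cohen-Macaulay property... ) — this is the delicate book-keeping. A cleaner route, which I would prefer, is to cite the result that a homogeneous ideal with a squarefree initial ideal is radical (this is well known; it follows because $S/\inid_\tau(I^G_\ee)$ is reduced, hence so is $S/I^G_\ee$ by semicontinuity of the ``reducedness'' along the Gr\"obner degeneration together with the fact that being reduced is an open condition on a flat family with reduced special fiber).

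Assembling the pieces: height equality from Hilbert function invariance under taking initial ideals plus height-preservation of G-biliaison; Cohen-Macaulayness from the analogous property of $\inid_\tau(I^G_\ee)$ proved in Theorem~\ref{thm:bdlinid}; and radicality from squarefreeness of $\inid_\tau(I^G_\ee)$ via the degeneration-to-the-initial-ideal argument. The main obstacle is making the ``radical initial ideal implies radical ideal'' step rigorous in the write-up; everything else is a direct appeal to Theorem~\ref{thm:bdlinid} and standard Gr\"obner-basis generalities.
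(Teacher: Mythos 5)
Your proposal is correct and follows essentially the same route as the paper, which states this corollary without a separate proof precisely because it is the standard transfer of properties from Theorem~\ref{thm:bdlinid}: $\inid_\tau(I_\ee^G)$ is squarefree and Cohen-Macaulay of height $\height P(G)$, and a homogeneous ideal with radical (resp.\ Cohen-Macaulay) initial ideal is radical (resp.\ Cohen-Macaulay), with height preserved since Hilbert functions agree. The only cosmetic suggestion is to replace the semicontinuity discussion for radicality by the one-line leading-term argument (if $f\in\sqrt{I}\setminus I$ is chosen with $\inid_\tau(f)\notin\inid_\tau(I)$, then $\inid_\tau(f)^m\in\inid_\tau(I)$ forces a contradiction), or simply cite the well-known fact as you do.
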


In the next theorem, we show that the ideals $I_\ee^G$ belong to the same G-biliaison class.

\begin{theorem}\label{thm:iso}
Let $G$ be a bipartite graph, and let $\ee=\{e_1,\dots,e_r\}$ be a path order matching. Let $\ee^\prime=\{e_1,\dots,e_{r-1}\}$. 
Then $I_{\ee^\prime}^G$ can be obtained from $I_\ee^G$ via a G-biliaison of degree $1$ on $I_{\ee^\prime}^{G\setminus e_r}$. 
\end{theorem}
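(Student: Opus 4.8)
The plan is to mirror the structure of Theorem~\ref{thm:bdlinid}, but working with the ideals themselves rather than their initial ideals. By Theorem~\ref{bil}, establishing an elementary G-biliaison of degree $-1$ from $I_\ee^G$ to $I_{\ee'}^G$ (equivalently a Basic Double G-Link of degree $1$ from $I_{\ee'}^G$ down to $I_\ee^G$) suffices to conclude a G-biliaison of degree $1$ in the other direction, so I will aim to prove the identity
\begin{equation*}
I_{\ee'}^G = e_r\, I_\ee^G + I_{\ee'}^{G\setminus e_r}
\end{equation*}
and then check that the hypotheses of the Basic Double G-Link construction hold. First I would set $c=\height P(G)$ and record from Corollary~\ref{cor:CM} that $I_\ee^G$ is Cohen-Macaulay of height $c$, that $I_{\ee'}^G$ is Cohen-Macaulay of height $c$, and that $I_{\ee'}^{G\setminus e_r}$ is radical and Cohen-Macaulay of height $c-1$; being radical, it is generically Gorenstein. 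I also need $e_r$ to be a nonzerodivisor modulo $I_{\ee'}^{G\setminus e_r}$: since $P(G\setminus e_r)$ and hence $I_{\ee'}^{G\setminus e_r}$ lives in the variables corresponding to $E(G\setminus e_r)=E(G)\setminus\{e_r\}$ (note $M^{G\setminus e_r}_{\ee'}$ involves only edges of $G\setminus e_r$), and $I_{\ee'}^{G\setminus e_r}$ is prime-free of the variable $e_r$ in a suitable sense, $e_r$ is regular on $S/I_{\ee'}^{G\setminus e_r}$.

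The heart of the argument is the displayed ideal identity, and I would prove it by a case analysis on generators, exactly paralleling the paragraph after equation~(\ref{eq:bdlinid}) but without taking initial terms. For the inclusion $\supseteq$: the generators $T_\ww$ with $\ww\in\C(G\setminus\ee)$ of $P(G\setminus\ee)$ lie in $P(G\setminus e_r)\subseteq I_{\ee'}^{G\setminus e_r}$; for $\ww\in\C(G\setminus e_r)$ passing through some $e_i$, $i\le r-1$, the monomial $m$ with $T_\ww=\prod_{i\in\I}e_i\, m-n$ lies in $M^{G\setminus e_r}_{\ee'}\subseteq I_{\ee'}^{G\setminus e_r}$; a generator $m\in M^G_\ee$ coming from a cycle using only $e_r$ among the $e_i$ is such that $e_r m = T_\ww + n$ with $n$ itself in $M^G_{\ee'}$ (after reducing to the generators of Remark~\ref{rem:GB}), so $e_r m\in e_r I_\ee^G + (M^G_{\ee'})\subseteq e_r I_\ee^G + I_{\ee'}^{G\setminus e_r}$; and the binomials $T_\ww$ of $P(G\setminus\ee')$, namely cycles through $e_r$ not through $e_1,\dots,e_{r-1}$, write as $e_r m - n$ with $m\in(M^G_\ee)$ and $n\in P(G\setminus\ee)$, so they too lie in $e_r I_\ee^G + I_{\ee'}^{G\setminus e_r}$. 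Finally a generator $m\in M^G_{\ee'}$ coming from a cycle through some $e_i$, $i\le r-1$: if the cycle avoids $e_r$ it lies in $M^{G\setminus e_r}_{\ee'}$; if it uses $e_r$ then, invoking Remark~\ref{rem:GB} to assume the complementary monomial $n$ is not divisible by any $e_j$, we get $m = e_r m'$ with $m'\in(M^G_\ee)$, so $m\in e_r I_\ee^G$. Conversely for $\supseteq$: both $I_{\ee'}^{G\setminus e_r}$ and $e_r\cdot M^G_\ee$ are visibly contained in $I_{\ee'}^G$ — the former because $P(G\setminus e_r)\subseteq P(G\setminus\ee')$ and $M^{G\setminus e_r}_{\ee'}\subseteq M^G_{\ee'}$, the latter because $e_r m$ reconstructs a binomial $T_\ww\in P(G\setminus\ee')$ together with a monomial already in $I_{\ee'}^G$.

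Once the identity is in hand, Lemma~\ref{lem:inverseBDL} applies with $H=I_\ee^G$, $J=I_{\ee'}^{G\setminus e_r}$, $f=e_r$, $I=I_{\ee'}^G$: we have $J\subseteq H$ (from the $\supseteq$ direction restricted appropriately, since $M^{G\setminus e_r}_{\ee'}\subseteq M^G_\ee$ and $P(G\setminus e_r)\subseteq P(G\setminus\ee)$), $J$ Cohen-Macaulay of height $c-1$, $e_r$ a nonzerodivisor mod $J$, and $I=e_r H + J$ Cohen-Macaulay of height $c$; so $I_{\ee'}^G$ is obtained from $I_\ee^G$ via a Basic Double G-Link of degree $1$ on $I_{\ee'}^{G\setminus e_r}$, which is generically Gorenstein. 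Reading this as an ascending elementary G-biliaison and applying Theorem~\ref{bil} gives the statement. I expect the main obstacle to be the bookkeeping in the case analysis for the ideal identity — in particular, making rigorous the claim that the cycle/walk surgery used in Remark~\ref{rem:GB} and Lemma~\ref{lem:GB} produces monomials dividing the intended targets — and double-checking that $e_r$ genuinely does not appear in $I_{\ee'}^{G\setminus e_r}$, since that regularity is what makes the Basic Double Link legitimate.
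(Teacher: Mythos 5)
Your proposed key identity $I_{\ee'}^G = e_r\, I_\ee^G + I_{\ee'}^{G\setminus e_r}$ is false at the level of the ideals themselves; it is only true after passing to initial ideals, which is exactly equation (\ref{eq:bdlinid}) in Theorem~\ref{thm:bdlinid}. A minimal counterexample is the $4$-cycle $G$ with edges $a=\{1,2\}$, $b=\{2,3\}$, $c=\{3,4\}$, $d=\{4,1\}$ and the path ordered matching $\ee=\{a\}$ (so $r=1$, $\ee'=\emptyset$): here $I_{\ee'}^G=P(G)=(ac-bd)$, $I_\ee^G=(c)$, $I_{\ee'}^{G\setminus a}=P(G\setminus a)=0$, and $a\cdot(c)+0=(ac)\neq(ac-bd)$. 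In fact even the easy inclusion $e_r I_\ee^G\subseteq I_{\ee'}^G$ fails, because $P(G)$ is a toric (prime) ideal and contains no monomials. The precise point where your case analysis breaks is the treatment of a cycle $\ww$ through $e_r$ avoiding $e_1,\dots,e_{r-1}$: writing $T_\ww=e_r m-n$, you claim the complementary monomial $n$ lies in $P(G\setminus\ee)$ (respectively in $I_{\ee'}^{G\setminus e_r}$, or in $(M^G_{\ee'})$ when $r=1$), but a monomial never lies in a toric ideal, and when $\ee'=\emptyset$ there are no monomial generators to absorb it. Consequently the Basic Double Link with multiplier $e_r$, and hence the application of Lemma~\ref{lem:inverseBDL} with $f=e_r$, cannot be carried out for the ideals; this is precisely the difference between Theorem~\ref{thm:bdlinid} (initial ideals, where only the leading monomials $e_r m$ matter) and Theorem~\ref{thm:iso}.

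The paper's proof therefore does not attempt a Basic Double Link here: it verifies Definition~\ref{bilid} directly by exhibiting the isomorphism $I_{\ee'}^G/I_{\ee'}^{G\setminus e_r}\cong \bigl[I_\ee^G/I_{\ee'}^{G\setminus e_r}\bigr](-1)$ as multiplication by a ratio $g'/g$ of two nonzerodivisors modulo $J=I_{\ee'}^{G\setminus e_r}$, where $g\in I_{\ee'}^G$ is written in terms of the generators $T_\ww$ (cycles of $G\setminus\ee'$ through $e_r$) and $e_r m_\zz$, and $g'\in I_\ee^G$ is obtained by replacing $T_\ww$ by $m_\ww$ and $e_r m_\zz$ by $m_\zz$; the generator-by-generator identities (\ref{equalideals}) and (\ref{equalideals2}) then give $g\, I_\ee^G+J=g'\,I_{\ee'}^G+J$, which yields (\ref{iso}). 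So the degree-$1$ ``multiplier'' realizing the biliaison is not the variable $e_r$ but this rational function, and your argument cannot be repaired by more careful bookkeeping of the cycle surgery: it needs to be replaced by such a module-isomorphism argument (or an equivalent one).
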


\begin{proof}
By Corollary~\ref{cor:CM}, $I_{\ee^\prime}^G,I_\ee^G,I_{\ee^\prime}^{G\setminus e_r}\subset S$ are Cohen-Macaulay 
and $I_{\ee^\prime}^{G\setminus e_r}$ is generically Gorenstein. Moreover, $\height I_{\ee^\prime}^G=\height I_\ee^G=\height P(G)$ 
and $\height I_{\ee^\prime}^{G\setminus e_r}=\height P(G\setminus e_r)=\height P(G)-1$. Hence it suffices to show that
\begin{equation}\label{iso}
I_{\ee^\prime}^G/I_{\ee^\prime}^{G\setminus e_r} \cong I_\ee^G/I_{\ee^\prime}^{G\setminus e_r}  (-1)
\end{equation}
as $S/I_{\ee^\prime}^{G\setminus e_r}$-modules.
Denote by $\overline{M}^G_\ee,\overline{M}^G_{\ee^\prime}$ the monomials in $M^G_\ee,M^G_{\ee^\prime}$ coming from cycles passing 
through $e_r$.
A generating set of $I_{\ee^\prime}^G/I_{\ee^\prime}^{G\setminus e_r}$ is given by $$\{T_\ww~:~\ww\in\C(G\setminus \ee^\prime\})\mbox{ through $e_r$}\}\cup \overline{M}^G_{\ee^\prime},$$ and a generating set of $I_\ee^G/I_{\ee^\prime}^{G\setminus e_r}$ is given by $\overline{M}^G_\ee$. 

Let $\cy\in\C(G\setminus \ee^\prime)$ passing through $e_r$, and let $T_\cy=m_\cy e_r-n_\cy$ be the associated binomial. Then $m_\cy\in\overline{M}^G_\ee$. We claim that 
\begin{equation}\label{equalideals}
m_\cy I_{\ee^\prime}^G+I_{\ee^\prime}^{G\setminus e_r}=T_\cy I_\ee^G+I_{\ee^\prime}^{G\setminus e_r}.
\end{equation}
In fact, let $\zz$ be a cycle through $e_r$ and let $T_\zz=e_r m_\zz-n_\zz$  be the associated binomial. Let $\ww$ be the closed walk that one obtains by gluing $\ww$ and $\zz$ along $e_r$ and removing $e_r$.
If $\zz\in\C(G\setminus \ee^{\prime})$, then $m_\cy T_\zz-m_\zz T_\cy=m_\zz n_\cy-m_\cy n_\zz=T_\ww\in I_{\ee^\prime}^{G\setminus e_r}$. Else, $m_\cy m_\zz e_r-m_\zz T_\cy=m_\zz n_\cy\in I_{\ee^\prime}^{G\setminus e_r}$, since it is divisible by the monomial in $\overline{M}^{G\setminus e_r}_{\ee^\prime}$ coming from $\ww$.

Let $\cy\in\C(G)$ be a cycle passing through $e_r$ and some of $e_1,\dots,e_{r-1}$. By Remark~\ref{rem:GB} we may assume that 
$T_\cy=\prod_{i\in\I}e_i m_\cy-n_\cy$ where $r\in\I$ and $e_1,\dots,e_{r-1}\nmid n_\cy$. 
Therefore, $\cy$ gives rise to monomials $m_\cy\in\overline{M}^G_\ee$ and 
$e_r m_\cy\in\overline{M}^G_{\ee^\prime}$. We claim that 
\begin{equation}\label{equalideals2}
m_\cy I_{\ee^\prime}^G+I_{\ee^\prime}^{G\setminus e_r}=e_r m_\cy I_\ee^G+I_{\ee^\prime}^{G\setminus e_r}.
\end{equation}
In fact, let $\zz$ be a cycle through $e_r$ and let $T_\zz$ be the associated binomial, $T_\zz=e_r m_\zz-n_\zz$. Let $\ww$ be the closed walk that one obtains by gluing $\cc$ and $\zz$ along $e_r$ and removing $e_r$.
If $\zz\in\C(G\setminus \ee^{\prime})$, then $m_\cy T_\zz-m_\zz e_r m_\cy=-m_\cy n_\zz\in I_{\ee^\prime}^{G\setminus e_r}$, 
since it is divisible by the monomial in $\overline{M}^{G\setminus e_r}_{\ee^\prime}$ coming from $\ww$.
Else, $m_\cy m_\zz e_r-m_\zz e_r m_\cy=0\in I_{\ee^\prime}^{G\setminus e_r}$.

Let $g\in I_{\ee^\prime}^G$ be a homogeneous nonzerodivisor modulo $I_{\ee^\prime}^{G\setminus e_r}$; 
$g$ exists by Corollary~\ref{cor:CM}. Write
$$g=\sum_{\ww\in W} g_\ww T_\ww+e_r\sum_{\zz\in Z} g_\zz m_\zz,$$
for some set $W$ of cycles of $G\setminus \ee^\prime$ through $e_r$, some set $Z$ of cycles of $G$ through $e_r$, 
and some $g_\ww,g_\zz\in S$. Write $T_\ww=m_\ww e_r-n_\ww$ and let 
$$g^\prime=\sum_{\ww\in W}g_\ww m_\ww+\sum_{\zz\in Z} g_\zz m_\zz\in I_\ee^G.$$ By (\ref{equalideals}) and (\ref{equalideals2}) we obtain 
\begin{equation}\label{equalideals3}
g^\prime I_{\ee^\prime}^G+I_{\ee^\prime}^{G\setminus e_r}=g I_\ee^G+I_{\ee^\prime}^{G\setminus e_r}.
\end{equation}
Then $g I_\ee^G+I_{\ee^\prime}^{G\setminus e_r}$ is a Basic Double G-Link of $I_\ee^G$ on $I_{\ee^\prime}^{G\setminus e_r}$, in particular it is Cohen-Macaulay of the same height as $P(G)$. Therefore, the same holds for $g^\prime I_{\ee^\prime}^G+I_{\ee^\prime}^{G\setminus e_r}\subseteq [I_{\ee^\prime}^{G\setminus e_r}+(g^\prime)]\cap I_{\ee^\prime}^G$. Hence $\height [I_{\ee^\prime}^{G\setminus e_r}+(g^\prime)]\geq \height I_{\ee^\prime}^{G\setminus e_r}+1$, so $g^\prime\nmid 0$ modulo $I_{\ee^\prime}^{G\setminus e_r}$. By equality (\ref{equalideals3}) and since $g,g^\prime\nmid 0$ modulo $I_{\ee^\prime}^{G\setminus e_r}$, multiplication by $g^\prime/g$ yields isomorphism (\ref{iso}).
\end{proof}

The next two technical lemmas play an important role in the proof of our main theorem.

\begin{lemma}\label{lem:x}
Assume that $G$ has no leaves.
If $\ee=\{e_1,\dots, e_r\}$ is a maximal path order matching, then $M^G_\ee$ contains an indeterminate $x$, and 
 $\ee$ is a path order matching in $G\setminus x$.
\end{lemma}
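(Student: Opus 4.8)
The plan is to exhibit the indeterminate $x$ concretely using the maximality of $\ee$, and then check that deleting $x$ does not destroy the path ordered matching structure. Relabel the vertices so that $e_i=\{i,i+r\}$ and the edges $f_i=\{i,i+1+r\}$ are present for $i=1,\dots,r-1$, as in Definition~\ref{def:OPM}. The first step is to locate a vertex to delete. I would look at the vertex $1+r$: it is the lower endpoint of $e_1=\{1,1+r\}$, and since $G$ has no leaves, $1+r$ has degree at least $2$, so there is an edge $\{j,1+r\}\in E(G)$ with $j\neq 1$; by condition (b) of Definition~\ref{def:OPM} we must have $j\geq 1$, and in fact $j>1$. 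I claim $x=x_{1+r}$ is forced into $M^G_\ee$ by maximality. (Symmetrically one might instead use the top vertex $1$; the argument should be set up so that whichever of the two "extreme" vertices is used, the corresponding variable lies in $M^G_\ee$.)

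The key step is to produce, from maximality, an even cycle $\ww\in\C(G)$ whose binomial $T_\ww$ has the form $x\prod_{i\in\I}e_i-n$ with $\emptyset\neq\I\subseteq[r]$ and $n$ a monomial not divisible by $x$; this is exactly what Definition~\ref{Amonom} requires for $x\in M^G_\ee$. The idea: since $\ee$ is maximal, one cannot prepend a new matching edge at the "left" end without violating (a) or (b). Concretely, consider the edge $\{j,1+r\}$ found above together with the matching edge $e_j=\{j,j+r\}$ and the path edges $f_1,\dots,f_{j-1}$ connecting $1,\dots,j$ on top to $2+r,\dots,j+r$ on the bottom. Gluing these with $e_1=\{1,1+r\}$ produces an even closed walk through the vertices $1,2,\dots,j$ on top and $1+r,2+r,\dots,j+r$ on the bottom. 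Reading off the binomial, the monomial containing $x_{1+r}$ is a product of the form $x_{1+r}\cdot e_1\cdot(\text{some }e_i\text{'s among }e_2,\dots,e_j)$ — using that $f_i=\{i,i+1+r\}$ pairs $x_i$ with $x_{i+1+r}$, so the $x_i$'s (for the "top" vertices other than $1$) and the $x_{j+r}$ cancel into $e_i$'s — while the other monomial is squarefree in the $x$'s of the involved vertices with no $e_i$ factor and, crucially, without the factor $x_{1+r}$. This shows $x_{1+r}\in M^G_\ee$, hence $M^G_\ee$ contains an indeterminate. I expect the bookkeeping of \emph{which} variables cancel — verifying that the even closed walk constructed from $e_1,\{j,1+r\},e_j,f_1,\dots,f_{j-1}$ really does yield a binomial of the required shape, and that maximality forces such a configuration to exist — to be the main obstacle, and this is where Figure~\ref{fig:POM5} and the green/red edge dichotomy should be invoked carefully.

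For the second assertion, once $x=x_{1+r}$ is identified, I would check that $\ee=\{e_1,\dots,e_r\}$ remains a path ordered matching in $G\setminus x$. Deleting the vertex $x_{1+r}$ removes exactly the edges incident to $1+r$; by construction the only matching edge incident to $1+r$ is $e_1$ — wait, that cannot be, since $e_1=\{1,1+r\}$ would be deleted. So the correct choice of $x$ must be a vertex incident to \emph{no} $e_i$ and to \emph{no} $f_i$; the natural candidate is $x_{1+r}$ only if one re-examines the labeling, or more plausibly a vertex like $x_{r+1}$ should be replaced by the observation that the relevant extra edge forces a \emph{different} deletable vertex. Thus the cleaner approach is: among the edges witnessing $x\in M^G_\ee$, choose the cycle $\ww$ of shortest length; its "new" vertex $v$ (the one making the matching non-extendable) is incident to none of $e_1,\dots,e_r$ and none of $f_1,\dots,f_{r-1}$, because any such incidence would either extend the matching (contradicting maximality) or shorten $\ww$. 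Then $G\setminus x_v$ still contains all the $e_i$ and $f_i$, and conditions (a),(b) of Definition~\ref{def:OPM} are inherited since they only constrain edges among the vertices $1,\dots,2r$, which survive. This gives that $\ee$ is a path ordered matching in $G\setminus x$, completing the proof.
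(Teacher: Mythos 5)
There is a genuine gap, and it starts with a misreading of what the indeterminate $x$ is. The set $M^G_\ee$ consists of monomials in $S=\KK[e_1,\dots,e_q]$, whose variables are the \emph{edges} of $G$; so the lemma asserts that some single edge variable $x\in E(G)$ occurs as the monomial $m$ of Definition~\ref{Amonom}, and $G\setminus x$ means deleting that edge. (This is forced by how the lemma is used: Lemma~\ref{lem:key} writes $I_\ee^G=I_\ee^{G\setminus x}+(x)$ with $(x)\subset S$, and Theorem~\ref{thm_main} says explicitly ``there exists $x\in E(G)$''.) Your proposal instead takes $x$ to be a vertex variable such as $x_{1+r}\in R$ and $G\setminus x$ to be vertex deletion; a monomial like $x_{1+r}\cdot e_1\cdots$ mixes variables of $R$ and of $S$ and cannot be a term of any $T_\ww\in P(G)\subset S$, so the claim you set out to prove is not the statement of the lemma. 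Moreover, even inside your setup the starting configuration cannot occur: condition (b) of Definition~\ref{def:OPM}, applied to the bottom vertex $1+r$ (i.e.\ to edges $\{i,1+r\}$, which force $1\geq i$), says that $1$ is the \emph{only} neighbour of $1+r$ among $1,\dots,r$; you applied (b) with the two endpoints interchanged and concluded the opposite, namely that there is an edge $\{j,1+r\}$ with $1<j\leq r$. Hence the cycle you try to assemble from $e_1,\{j,1+r\},e_j,f_1,\dots,f_{j-1}$ need not exist, and the second neighbour of $1+r$ (which exists since there are no leaves) actually lies outside $[2r]$. Finally, maximality of $\ee$ is never genuinely used in your argument — you acknowledge the key bookkeeping is unresolved — whereas it is exactly what must produce the edge closing the cycle.

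For comparison, the paper's proof works at the other end of the path: since $r$ is not a leaf there is an edge $\{r,s\}$, and (b) forces $s>2r$; since $s$ is not a leaf there is an edge $\{s,j\}$ with $j\neq r$. If $j\leq 2r$ then $j<r$ by bipartiteness, and $f_j,e_{j+1},f_{j+1},\dots,e_r,\{r,s\},\{s,j\}$ is an even cycle whose binomial is $\{s,j\}\,e_{j+1}\cdots e_r - f_j\cdots f_{r-1}\{r,s\}$. If $j>2r$, maximality of $\ee$ forces an edge $\{j,t+r\}$ for some $t\in[r]$ (otherwise $e_1,\dots,e_r,\{j,s\}$ would be a longer path ordered matching), and one closes the even cycle $e_t,f_t,\dots,e_r,\{r,s\},\{s,j\},\{j,t+r\}$. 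In both cases the edge $x=\{s,j\}$ is the required indeterminate of $S$, and since $s>2r$ this edge is none of the $e_i$ or $f_i$, so deleting it preserves conditions (a) and (b) and $\ee$ remains a path ordered matching in $G\setminus x$ — the point your final paragraph was struggling with, which becomes immediate once $x$ is the correct object.
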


\begin{proof}
As $r$ is not a leaf, there exists an edge $\{r, s\}$. Since $\ee$ is a path order matching, then $s>2r$. As $s$ is also not a leaf, there exists another edge $\{s,j\}$ with $j\neq r$. If $j>2r$, then there exists $t\in\{1,\ldots,r\}$ such that $\{j,t+r\}\in E(G)$, since otherwise $e_1,\ldots,e_r,\{j,s\}$ is a path ordered matching, contradicting maximality of $\ee$. Therefore $G$ contains the even closed cycle $$\{e_t,f_t,e_{t+1},f_{t+1}\ldots,e_r,\{r,s\},\{s,j\},\{j,t+r\}\}.$$
If instead $j\leq 2r$, then $j<r$, since $G$ is bipartite. In this case, $G$ contains the even closed cycle 
$$\{f_j,e_{j+1},f_{j+1},\ldots,e_{r},\{r,s\},\{s,j\}\},$$ 
In both cases, $x=\{s,j\}\in M^G_\ee$. 
\end{proof}

\begin{lemma}\label{lem:key}
Let $G$ be a simple, bipartite graph with no leaves, 
and assume that $\ee=\{e_1,\ldots,e_r\}$ is a maximal path ordered matching.
Let $x\in M^G_\ee$ be an indeterminate as in Lemma~\ref{lem:x}. Then
$$I_\ee^G= I_\ee^{G\setminus x} + (x).$$
\end{lemma}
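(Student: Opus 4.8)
The plan is to prove the two inclusions $I_\ee^G\subseteq I_\ee^{G\setminus x}+(x)$ and $I_\ee^{G\setminus x}+(x)\subseteq I_\ee^G$ separately, working generator by generator with the natural generating sets coming from Lemma~\ref{lem:GB}. Recall that $I_\ee^G=P(G\setminus\ee)+(M^G_\ee)$ and that $P(G\setminus\ee)$ is generated by the binomials $T_\ww$ for $\ww\in\C(G\setminus\ee)$, while $M^G_\ee$ consists of the monomials $m$ such that $m\prod_{i\in\I}e_i-n=T_\ww$ for some even closed walk $\ww$ and some $\emptyset\neq\I\subseteq[r]$. Since $x=\{s,j\}$ is an edge of $G$ but is neither one of the $e_i$ nor one of the $f_i$ (it is the edge witnessing maximality in Lemma~\ref{lem:x}), deleting $x$ from $G$ leaves $\ee$ a path order matching, so $I_\ee^{G\setminus x}$ is defined.

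For the inclusion $I_\ee^{G\setminus x}+(x)\subseteq I_\ee^G$: the containment $(x)\subseteq I_\ee^G$ is exactly the statement $x\in M^G_\ee$ from Lemma~\ref{lem:x}. For $I_\ee^{G\setminus x}\subseteq I_\ee^G$, one observes that $G\setminus x$ is a subgraph of $G$, so every even closed walk of $G\setminus x$ (resp. $(G\setminus x)\setminus\ee$) is an even closed walk of $G$ (resp. $G\setminus\ee$), whence every binomial generator $T_\ww$ of $P((G\setminus x)\setminus\ee)$ lies in $P(G\setminus\ee)$ and every monomial in $M^{G\setminus x}_\ee$ lies in $M^G_\ee$. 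This direction is essentially formal.

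The reverse inclusion $I_\ee^G\subseteq I_\ee^{G\setminus x}+(x)$ is the substantive one. Work modulo $(x)$: it suffices to show that the image of each natural generator of $I_\ee^G$ lies in $I_\ee^{G\setminus x}$. A generator $T_\ww$ with $\ww\in\C(G\setminus\ee)$, or a monomial $m\in M^G_\ee$ coming from a walk $\ww$, that does not use the edge $x$ is literally a generator of the corresponding object for $G\setminus x$, so there is nothing to do. The point is the generators coming from walks $\ww$ that pass through $x$. For a binomial $T_\ww=m-n$ with $\ww\in\C(G\setminus\ee)$ through $x$: one of $m,n$ is divisible by the variable $x$, hence $T_\ww\equiv\pm(\text{the other monomial})\pmod{(x)}$, and I must show that this surviving monomial lies in $M^{G\setminus x}_\ee\subseteq I_\ee^{G\setminus x}$ — this is where one uses the explicit even cycle through $x$ constructed in the proof of Lemma~\ref{lem:x} (containing some $e_t$), which can be spliced with $\ww$ at the endpoints of $x$ to produce an even closed walk in $G\setminus x$ passing through $e_t$ whose associated monomial divides the surviving monomial. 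An analogous splicing argument handles monomials $m\in M^G_\ee$ arising from walks through $x$: write $m\prod_{i\in\I}e_i-n=T_\ww$; if $x\nmid m$ then $m$ already belongs to $M^{G\setminus x}_\ee$ (replace the $x$-portion of $\ww$ by the complementary arc through $e_t$), and if $x\mid m$ then $m/x\cdot x$ forces us to absorb the $x$ into a new cycle, again using the Lemma~\ref{lem:x} cycle, so that $m\in I_\ee^{G\setminus x}+(x)$. Throughout, Remark~\ref{rem:cycles=walks} lets us pass freely between even closed walks and even cycles when extracting the relevant monomial divisors.

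The main obstacle is the bookkeeping in this reverse inclusion: one has to be careful that the spliced walk is genuinely a walk in $G\setminus x$ (i.e. that the detour through $e_t$ and the $f_i$'s does not re-introduce $x$ or leave $G$), that it has even length — this is automatic since $G$, and hence every subgraph, is bipartite — and that the monomial it contributes divides the monomial one is trying to place in $M^{G\setminus x}_\ee$, with the correct choice of which of the two monomials of $T_\ww$ is "kept". Organizing the case analysis by whether $\ww$ meets $\{e_1,\dots,e_r\}$ and by the $x$-divisibility of the two monomials of $T_\ww$, exactly as in the proofs of Lemma~\ref{lem:GB} and Theorem~\ref{thm:bdlinid}, should make each case short.
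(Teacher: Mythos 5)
Your proposal is correct and follows essentially the same route as the paper: the inclusion $I_\ee^{G\setminus x}+(x)\subseteq I_\ee^G$ is the formal one, and for the reverse inclusion you handle the generators coming from cycles through $x$ by gluing them along $x$ with the special cycle $\ww_x$ from Lemma~\ref{lem:x} (whose binomial is $x\prod_{j\in\J}e_j-a$), obtaining an even closed walk in $G\setminus x$ and invoking Remark~\ref{rem:cycles=walks}, exactly as in the paper. The only cosmetic difference is that you split into cases (binomial versus monomial generators, $x\mid m$ versus $x\mid n$) where the paper treats all cycles through $x$ uniformly as $T_\ww=m\prod_{i\in\I}e_i-xn$ with $\I$ possibly empty, and your case $x\mid m$ needs no splicing at all since then $m\in(x)$.
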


\begin{proof}
By Lemma~\ref{lem:x} we have $I_\ee^G\supseteq I_\ee^{G\setminus x} + (x)$. 
In order to show that $I_\ee^G\subseteq I_\ee^{G\setminus x}+(x)$, it suffices to consider the cycles passing through $x$. 
By Lemma~\ref{lem:x}, there exist a $\emptyset\neq \J\subseteq [r]$ and an even cycle $\ww_x$ in $G$  such that $T_{\ww_x} = x\prod_{i\in \J}e_i - a$. 
Let $\ww \in\C(G)$ be a cycle through $x$ with $T_{\ww} = m \prod_{i\in \I}e_i - x n$.  Gluing $\ww$ and $\ww_x$ along  $x$ and removing $x$, we obtain an even closed walk $\zz$ in $G\setminus x$. As $T_\zz = m \prod_{i\in \I}e_i \prod_{j\in \J}e_j - an$, then $m \in I_\ee^{G\setminus x}$ by Remark \ref{rem:cycles=walks}.
\end{proof}

We are finally ready to prove the main theorem.

\begin{theorem}\label{thm_main}
If $G$ is a bipartite graph, then $P(G)$ belongs to the G-biliaison class of a complete intersection. In particular, it belongs to the G-liaison class of a complete intersection.
\end{theorem}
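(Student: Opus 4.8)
The plan is to prove Theorem~\ref{thm_main} by induction, using the machinery of ordered matchings built up in Theorems~\ref{thm:bdlinid} and~\ref{thm:iso} together with the two technical Lemmas~\ref{lem:x} and~\ref{lem:key}. The key observation is that $P(G) = I_\ee^G$ for the empty path ordered matching $\ee = \emptyset$, so it suffices to show that every ideal of the form $I_\ee^G$, for $G$ bipartite and $\ee$ a path ordered matching, belongs to the G-biliaison class of a complete intersection. I would set up a double induction: the outer induction on the number of edges $q_G$ of $G$, and an inner induction that increases the length of the path ordered matching until it becomes maximal.

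The inductive step would proceed as follows. First I would reduce to the case where $G$ has no leaves: if $v$ is a leaf with unique edge $e = \{v,w\}$, then $e$ does not appear in any even closed walk, so $P(G)$ and $P(G\setminus v)$ generate ``the same'' ideal (up to the extra polynomial variable $e$), and more generally $I_\ee^G = I_\ee^{G\setminus v} S$; since adding a free variable does not change the G-biliaison class of a complete intersection, we may delete all leaves and assume $G$ has none. (One should also dispatch the trivial base case where $P(G) = (0)$, which is itself a complete intersection — this happens e.g.\ when $G$ is a forest.) Now, given $G$ with no leaves and a path ordered matching $\ee = \{e_1,\dots,e_r\}$, by Theorem~\ref{thm:iso} the ideal $I_{\ee'}^G$, with $\ee' = \{e_1,\dots,e_{r-1}\}$, is obtained from $I_\ee^G$ by an elementary G-biliaison; iterating, $P(G) = I_\emptyset^G$ is obtained from $I_\ee^G$ by a sequence of $r$ G-biliaisons. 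So it is enough to prove that $I_\ee^G$ lies in the G-biliaison class of a complete intersection for $\ee$ a \emph{maximal} path ordered matching. At this point Lemma~\ref{lem:key} applies: $I_\ee^G = I_\ee^{G\setminus x} + (x)$ for a suitable indeterminate $x$, and by Lemma~\ref{lem:x}, $\ee$ is still a path ordered matching in $G\setminus x$. Since $G\setminus x$ has one fewer edge than $G$, the outer induction hypothesis gives that $I_\ee^{G\setminus x}$ — viewed in the polynomial ring omitting $x$ — is in the G-biliaison class of a complete intersection; adding back the variable $x$ and the generator $x$ preserves this (intersecting a complete intersection with a hyperplane, or equivalently adding $x$ to both the ideal and the ambient ring, stays a complete intersection). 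Hence $I_\ee^G$, and therefore $P(G)$, is in the G-biliaison class of a complete intersection. The final sentence of the statement follows from Theorem~\ref{bil}.

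There is a subtlety in the leaf-reduction and in the ``adding a variable'' step that I should be careful about: one must track whether $\ee$ remains a path ordered matching under these operations (deleting a leaf edge not in $\ee$, and deleting the vertices of $x$ in Lemma~\ref{lem:key}) and that $I_\ee^G$ really does extend to/restrict from the smaller ring as claimed, i.e.\ that $M^G_\ee$ behaves correctly. Lemmas~\ref{lem:x} and~\ref{lem:key} are precisely designed to make the maximal case work, so the main content is already in hand; the remaining work is bookkeeping about ambient rings and the termination of the induction (the pair $(q_G, r_{\max} - r)$, say, decreases lexicographically, where $r_{\max}$ is the length of a maximal extension of $\ee$).

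The step I expect to be the genuine obstacle — though it has been prepared in advance — is ensuring that the maximal path ordered matching reduction in Lemma~\ref{lem:key} actually decreases the relevant complexity: we remove the variable $x$, which drops $q_G$, but we need $\ee$ to still be a \emph{path ordered} matching in $G \setminus x$ (guaranteed by Lemma~\ref{lem:x}) and we need the leaf-free hypothesis to be re-establishable after the deletion (deleting $x$ may create new leaves, which we then strip off, possibly shrinking $\ee$ — but stripping leaves only removes edges not in $\ee$ since the edges of $\ee$ lie on even cycles, hence are never leaf edges). Making this loop airtight, so that the induction is well-founded and every invocation of the earlier theorems is legitimate, is the crux; everything else is a direct assembly of the results proved above.
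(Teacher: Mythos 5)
Your overall route is the paper's: reduce to a leaf-free graph, use Theorem~\ref{thm:iso} repeatedly to connect $P(G)=I_\emptyset^G$ to $I_\ee^G$ for a maximal path ordered matching $\ee$, then use Lemmas~\ref{lem:x} and~\ref{lem:key} to write $I_\ee^G=I_\ee^{G\setminus x}+(x)$ and induct on the number of edges, using the (standard, and also implicitly used in the paper) fact that adjoining the variable $x$ to the ideal and the ambient ring preserves G-biliaison equivalence and complete intersections. The assembly of these ingredients is correct, and the final reduction to Theorem~\ref{bil} is as in the paper.

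The one genuine flaw is in how you close the induction. Because you strengthen the statement to all ideals $I_\ee^G$ and apply the outer induction hypothesis directly to $I_\ee^{G\setminus x}$, the recursive call must strip leaves from a graph in which $\ee\neq\emptyset$, and your justification that this never touches $\ee$ --- ``the edges of $\ee$ lie on even cycles, hence are never leaf edges'' --- is not true: a path ordered matching need not lie on any cycle (its defining conditions concern the edges $f_i$ and the non-existence condition (b), not cycles), and after deleting $x$ the cascading leaf removal can indeed reach $e_r$ or $e_1$ (for instance if $r$ had degree two in $G$, with second edge $\{r,s\}$, and the stripping removes $\{r,s\}$). The situation is repairable --- only $e_1$ or $e_r$ can become leaf edges, such edges contribute nothing to $I_\ee^G$, and the truncated matching is again path ordered --- but this needs an argument you have not supplied, and it is exactly the point you yourself flag as the crux. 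The paper sidesteps it by not strengthening the statement: after Lemma~\ref{lem:key} it applies Theorem~\ref{thm:iso} once more inside $G\setminus x$ (where $\ee$ is still a path ordered matching by Lemma~\ref{lem:x}) to pass from $I_\ee^{G\setminus x}$ back to $P(G\setminus x)$, so the induction hypothesis is invoked only for toric ideals $P(G\setminus x)$, for which leaf removal changes nothing. With that adjustment your argument coincides with the paper's proof.
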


\begin{proof}
If $G'$ is obtained from $G$ by removing the leaves, then $P(G')=P(G)$. Therefore, we may assume without loss of generality that $G$ has no leaves.
Let $\ee=\{e_1,\dots, e_r\}$ be a maximal path order matching in $G$, then $\ee(s)=\{e_1,\dots,e_s\}$ is a path order matching for every 
$1\leq s\leq r$. 
By Theorem~\ref{thm:iso} we have a G-biliaison of degree $1$ between $I_{\ee(s-1)}^G$ and $I_{\ee(s)}^G$, for $1\leq s\leq r$. Therefore, $P(G)$ 
is obtained from $I_\ee^G$ via a sequence of ascending G-biliaisons.
By  Lemma~\ref{lem:x} and Lemma \ref{lem:key} there exists $x\in E(G)$ such that $I_\ee^G=I_\ee^{G\setminus x} + (x)$. The ideals $P(G\setminus x)$ and $I_\ee^{G\setminus x}$ belong to the same G-biliaison class by Theorem~\ref{thm:iso}, hence so do $P(G\setminus x)+(x)$ and $I_\ee^{G\setminus x}+(x)$. Therefore $P(G)$ and $P(G\setminus x) + (x)$ belong to the same G-biliaison class. We conclude by induction on the number of edges of $G$.
\end{proof}

Denote by $\Delta^G_\ee$ the simplicial complex on $E(G)$, whose Stanley-Reisner ideal is $\inid_\tau(I^G_\ee)$.
The sequence of G-biliaisons of Theorem~\ref{thm_main} allow us to show that $\Delta^G_\ee$ is vertex decomposable.

\begin{corollary}
Let $\ee=\{e_1,\dots, e_r\}$ be a path order matching in a simple bipartite graph $G$, let $\tau$ be the term order of Lemma~\ref{lem:GB}. Then $\Delta^G_\ee$ is vertex decomposable. In particular, the simplicial complex associated to $\inid_\tau P(G)$ is vertex decomposable.
\end{corollary}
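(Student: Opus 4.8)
The plan is to prove, by induction on the number of edges of $G$, that $\Delta^G_\ee$ is vertex decomposable for every bipartite graph $G$ and every path order matching $\ee$ in $G$. As in the proof of Theorem~\ref{thm_main} I would first reduce to the case where $G$ has no leaves: a leaf edge $e$ lies in no even cycle, so either $e\notin\ee$ and then $e$ does not occur in $I_\ee^G$, or $e\in\ee$ and then $e$ drops out of $I_\ee^G$ as well; in both cases $\Delta^G_\ee$ agrees, as an abstract simplicial complex, with $\Delta^{G\setminus e}_\ee$, which has one fewer edge (and on the way one checks that $\ee$, minus $e$ if necessary, is still a path order matching in $G\setminus e$). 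The base of the induction is $E(G)=\emptyset$, where the complex is a simplex.

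For the inductive step, fix a path order matching $\ee$ in $G$. If $\ee$ is maximal, then Lemma~\ref{lem:x} and Lemma~\ref{lem:key} furnish an indeterminate $x$ with $I_\ee^G=I_\ee^{G\setminus x}+(x)$; since $x$ does not occur in $I_\ee^{G\setminus x}$, passing to initial ideals yields $\inid_\tau(I_\ee^G)=\inid_\tau(I_\ee^{G\setminus x})+(x)$, so $\Delta^G_\ee=\Delta^{G\setminus x}_\ee$ as abstract complexes ($x$ being a non-vertex), and this is vertex decomposable by the inductive hypothesis applied to $G\setminus x$. If $\ee$ is not maximal, I would enlarge it to a path order matching $\ee^+$ with $\ee=\ee^+\setminus\{e_s\}$ for a suitable $e_s$ — here one uses that a non-maximal path order matching always extends by a single edge, which can be read off from Definition~\ref{def:OPM} — and run a secondary induction on the bounded quantity $|V(G)|-2|\ee|$, with the maximal case serving as its base.

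In the non-maximal case, Remark~\ref{rem:bdlinid} gives $\inid_\tau(I_\ee^G)=e_s\,\inid_\tau(I_{\ee^+}^G)+\inid_\tau(I_\ee^{G\setminus e_s})$. Since all three ideals are monomial, $e_s$ occurs in neither $\inid_\tau(I_{\ee^+}^G)$ (a matching variable) nor $\inid_\tau(I_\ee^{G\setminus e_s})$ (as $e_s\notin E(G\setminus e_s)$), and comparing the generating sets from Lemma~\ref{lem:GB} shows $\inid_\tau(I_\ee^{G\setminus e_s})\subseteq\inid_\tau(I_{\ee^+}^G)$. It follows that $\bigl(\inid_\tau(I_\ee^G):e_s\bigr)=\inid_\tau(I_{\ee^+}^G)$ and $\inid_\tau(I_\ee^G)+(e_s)=\inid_\tau(I_\ee^{G\setminus e_s})+(e_s)$, which in terms of Stanley--Reisner complexes says $\lk_{\Delta^G_\ee}(e_s)=\Delta^G_{\ee^+}$ and $\Delta^G_\ee\setminus e_s=\Delta^{G\setminus e_s}_\ee$. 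The first is vertex decomposable by the secondary induction, the second by the primary induction. Purity and the dimension equalities required by the definition follow from Theorem~\ref{thm:bdlinid} and the height computations used there: with $c=\height P(G)$ and $q=|E(G)|$, the three ideals are Cohen--Macaulay of heights $c$, $c$ and $c-1$, so $\dim\Delta^G_\ee=q-c-1=\dim(\Delta^G_\ee\setminus e_s)=\dim\lk_{\Delta^G_\ee}(e_s)+1$, the link and deletion living on $q-1$ vertices. Thus $e_s$ yields a vertex decomposition of $\Delta^G_\ee$, which completes the induction; the final assertion is the case $\ee=\emptyset$, where $I_\emptyset^G=P(G)$ and $\tau$ is an arbitrary lexicographic order.

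The step I expect to be the real work is the one in the non-maximal case: identifying $\lk_{\Delta^G_\ee}(e_s)$ and $\Delta^G_\ee\setminus e_s$ with complexes of the same type $\Delta^{G'}_{\ee'}$ so that the induction closes — that is, verifying the colon and sum identities for these monomial ideals — together with the combinatorial fact that every non-maximal path order matching can be extended by one edge. Everything else is the standard recursive bookkeeping for vertex decomposability, plus the Cohen--Macaulayness and height statements already contained in Theorem~\ref{thm:bdlinid}.
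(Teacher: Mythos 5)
Your proposal follows essentially the same route as the paper: induction on $|E(G)|$ with a secondary induction measuring distance from a maximal path ordered matching, the maximal case handled via Lemma~\ref{lem:x} and Lemma~\ref{lem:key}, and the non-maximal case via the decomposition of Remark~\ref{rem:bdlinid}, identifying $\Delta^G_\ee\setminus e_s=\Delta^{G\setminus e_s}_\ee$ and $\lk_{\Delta^G_\ee}(e_s)=\Delta^G_{\ee^+}$, with purity and dimensions coming from Theorem~\ref{thm:bdlinid}. The only differences (your leaf reduction, the parameter $|V(G)|-2|\ee|$ instead of $s-r$, and the explicit height bookkeeping) are cosmetic, and your unproved claim that a non-maximal path ordered matching extends by a single edge is exactly the step the paper also takes for granted.
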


\begin{proof}
We proceed by double induction on $|E(G)|$ and $s-r$, where $\ee^\prime =
e_1^\prime,\dots,e_s^\prime$ is a maximal path ordered matching
containing $\ee$. We assume that $e_1,\dots,e_r$ appear in the same order in $\ee^\prime$, but not that they appear consecutively.
If $|E(G)|\leq 3$, then $G$ contains no cycles, so $\Delta^G_\ee$ is a simplex.
If $\ee$ is maximal, then by Lemma \ref{lem:key}
$\inid_\tau(I^G_\ee) = \inid_\tau(I^{G\setminus
x}_\ee) + (x)$. This means that $\Delta^{G\setminus x}_\ee$ is the restriction 
of $\Delta^G_\ee$ to the vertex set $|E(G)\setminus x|$, and $\{x\}\notin
\Delta^G_\ee$. By induction on the number of edges, $\Delta^{G\setminus x}_\ee$ is vertex decomposable.
If $e_1,\dots,e_r$ is not maximal, let $e_{r+1}$ such that $\ee^\prime = \{e_1,\dots,
e_i,e_{r+1},e_{i+1},\dots,e_r\}$ is a path ordered matching. By Lemma \ref{lem:GB} 
and Remark~\ref{rem:bdlinid}
$$\Delta^{G}_\ee\setminus e_{r+1} = \Delta^{G\setminus
e_{r+1}}_\ee~\mbox{ and }~\link_{\Delta^{G}_\ee}{e_{r+1}} = \Delta^G_{\ee^\prime},$$
and both are vertex decomposable by induction. 
\end{proof}

\end{document}